\def\@cite#1#2{{\m@th\upshape\bfseries%
[{#1\if@tempswa{\m@th\upshape\mdseries, #2}\fi}]}}
\theoremstyle{plain}
\newtheorem{theorem}{Theorem}[section]
\newtheorem{corollary}[theorem]{Corollary}
\newtheorem{proposition}[theorem]{Proposition}
\newtheorem{lemma}[theorem]{Lemma}
\theoremstyle{definition}
\newtheorem{remark}[theorem]{Remark}
\newtheorem*{acknow}{Acknowledgements}
\theoremstyle{remark}
\newcommand{\bC}{{\mathds{C}}}
\newcommand{\bF}{{\mathds{F}}}
\newcommand{\bN}{{\mathds{N}}}
\newcommand{\bR}{{\mathds{R}}}
\newcommand{\bT}{{\mathds{T}}}
\newcommand{\bZ}{{\mathds{Z}}}
  \newcommand{\B}{{\mathcal{B}}}
  \newcommand{\E}{{\mathcal{E}}}
\renewcommand{\O}{{\mathcal{O}}}
  \newcommand{\T}{{\mathcal{T}}}
  \newcommand{\U}{{\mathcal{U}}}
  \newcommand{\X}{{\mathcal{X}}}
\def\al{\alpha}
\def\be{\beta}
\def\ga{\gamma}
\def\de{\delta}
\def\ka{\kappa}
\def\la{\lambda}
\def\si{\sigma}
\def\vpi{\varphi}
\newcommand{\rC}{{\mathrm{C}}}
\newcommand{\Bs}{{\mathbf{s}}}
\newcommand{\Bv}{{\mathbf{v}}}
\newcommand{\foral}{\text{ for all }}
\newcommand{\qand}{\quad\text{and}\quad}
\newcommand{\qif}{\quad\text{if}\quad}
\newcommand{\qfor}{\quad\text{for}\ }
\newcommand{\ca}{\mathrm{C}^*}
\newcommand{\ol}{\overline}
\newcommand{\wt}{\widetilde}
\newcommand{\wh}{\widehat}
\newcommand{\Aut}{\operatorname{Aut}}
\newcommand{\diag}{\operatorname{diag}}
\newcommand{\End}{\operatorname{End}}
\newcommand{\ev}{\operatorname{ev}}
\newcommand{\id}{{\operatorname{id}}}
\newcommand{\im}{{\operatorname{Im}}}
\newcommand{\mt}{\emptyset}
\newcommand{\spn}{\operatorname{span}}
\newcommand{\sca}[1]{\left\langle#1\right\rangle} 
\newcommand{\nor}[1]{\left\Vert #1\right\Vert} 
\long\def\symbolfootnote[#1]#2{\begingroup%
\def\thefootnote{\fnsymbol{footnote}}\footnote[#1]{#2}\endgroup}
\begin{document}

\title[KMS states on Pimsner algebras associated with C*-dynamics]{KMS states on Pimsner algebras associated with C*-dynamical systems}

\author[E.T.A. Kakariadis]{Evgenios T.A. Kakariadis}
\address{School of Mathematics and Statistics\\ Newcastle University\\ Newcastle upon Tyne\\ NE1 7RU\\ UK}
\email{evgenios.kakariadis@ncl.ac.uk}

\thanks{2010 {\it  Mathematics Subject Classification.}
46L55, 46L30, 58B34}
\thanks{{\it Key words and phrases:} KMS states, Pimsner algebras, C*-dynamical systems.}

\maketitle

\begin{abstract}
We examine the theory of the KMS states on Pimsner algebras arising from multivariable unital C*-dynamical systems.
As an application we show that Pimsner algebras of piecewise conjugate classical systems attain the same KMS states, even though it is still an open problem whether or not they are $*$-isomorphic.
\end{abstract}

\section{Introduction}

In the most part of this paper we investigate the structure of the KMS states on Pimsner algebras associated with C*-dynamical systems.
The motivation for this work is two-fold.
First of all we are inspired by the growing interest on the structure of the KMS states that involves: the Cuntz algebra \cite{OlePed78}, Cuntz-Krieger algebras \cite{EFW84}, Hecke algebras \cite{BosCon95}, C*-algebras associated with subshifts \cite{MWM98}, Pimsner algebras \cite{LacNes04}, the Toeplitz algebra of $\bN \rtimes \bN^\times$ \cite{LacNes11, LacRae10}, C*-algebras of dilation matrices \cite{LRR11}, C*-algebras of self-similar actions \cite{LRRW13}, topological dynamics \cite{AHR14, Tho11, Tho12}, higher-rank graphs \cite{HLRS14}, and Nica-Pimsner algebras \cite{Kak14-2}.
Secondly we are interested in analyzing further equivalence relations on multivariable classical systems \cite{DavKat11, KakKat12}.
Our motivation relies on the interaction of that theory with other fields of mathematics.
In a series of papers Cornelissen \cite{Cor12} and Cornelissen and Marcolli \cite{CorMar11, CorMar13} illustrate such a strong link by inserting and applying the seminal work on piecewise conjugacy of Davidson and Katsoulis \cite{DavKat11} into number theory and the reconstruction of graphs.
It remains an open problem whether piecewise conjugacy implies unitary equivalence for multivariable classical systems, and thus whether these two notions coincide.
A positive answer would allow the use of the flexible local features of piecewise conjugacy in order to overpass the inconvenient attributes of the global aspect of unitary equivalence.
Since unitary equivalent systems admit $*$-isomorphic Pimsner algebras, we use the KMS states to test this question.

Given $d$ $*$-endomorphisms $\al_i$ of a C*-algebra $A$ there is a Toeplitz-Pimsner algebra $\T(A,\al)$ and a Cuntz-Pimsner algebra $\O(A,\al)$ that one can form.
Both of them admit an action $\si$ of $\bR$ induced by the gauge action.
We show that there is a phase transition at the inverse temperature $\be = \log d$ in the following sense:
(a) for $\be \in (-\infty, \log d)$ there are no KMS states (Proposition \ref{P: cond KMS});
(b) for $\be \in (\log d, +\infty)$ there is an affine weak*-homeomorphism from the simplex of the tracial states on $A$ onto the $(\si,\be)$-KMS states on $\T(A,\al)$ (Theorem \ref{T: KMS TP}).
Moreover there is an affine weak*-homeomorphism from the simplex of the tracial states (resp. states) on $A$ onto the simplex of the KMS${}_\infty$ states (resp. ground states) on $\T(A,\al)$ (Proposition \ref{P: ground TP} and Proposition \ref{P: infty TP}).
By using that $\O(A,\al)$ is a quotient of $\T(A,\al)$ we show that the same results pass over for the tracial states on $A$ that vanish on the ideal $(\cap_{i=1}^d \ker \al_i)^\perp$ (Theorem \ref{T: KMS CP} and Corollary \ref{C: ground/infty CP}).
However when $(A,\al)$ is injective then there are no $(\si,\be)$-KMS states on $\O(A,\al)$ for all $\be \neq \log d$ (Proposition \ref{P: inj CP}).

Different phenomena appear for the critical temperature $\be = \log d$, making a unification hard to achieve.
Our analysis in this case does not yield a parametrization.
For example if $A = C(X)$ and $d=1$ then $\O(A,\id) = C(X \times \bT)$ admits plenty of $(\si,0)$-KMS states, i.e. tracial states.
For $A = C(X)$ and $d>1$ we have that any state on $A$ induces a $(\si,\log d)$-KMS state on $\O(A,\id)$; in particular if $A = \bC$ and $d>1$ then $\O(\bC,\id) = \O_d$ has a unique $(\si,\log d)$-KMS state \cite{OlePed78, Eva80, BEK80}.
We show that there are (finite dimensional) cases where the $(\si,\log d)$-KMS state is unique and other (finite dimensional) cases where there are more than one (Remark \ref{R: comm logd CP}).
In addition we give a sufficient and necessary condition for $\O(A,\al)$ to have KMS states at $\log d$ when $(A,\al)$ is injective and $d >1$ (Proposition \ref{P: affine log d}).
This approach gives a direct way for obtaining $(\si,\log d)$-KMS states on $\O(A,\al)$ for injective classical systems (Remark \ref{R: comm logd CP}).
This is achieved by using the operator that averages over the actions $\al_i$, and it is inspired by the Perron-Frobenius type theory of Matsumoto, Watatani and Yoshida \cite{MWM98}.
The case where $d=1$ is reduced to finding tracial states on a usual C*-crossed product (Proposition \ref{P: tracial crossed}).
A key tool in this approach is the tail adding techniques of the author with Katsoulis \cite{Kak11-1, KakKat11}.

The analysis of Laca and Neshveyev \cite{LacNes04} on Pimsner algebras makes also use of a Perron-Frobenius type operator.
Moreover it concerns actions of $\bR$ beyond the ones inherited from the gauge action.
Their approach is rather illuminating but it seems hard to be adopted in our specific example to provide a parametrization of the KMS states.
It is the recent works of Laca and Raeburn \cite{LacRae10}, and Laca, Raeburn, Ramagge and Whittaker \cite{LRRW13} that supply us with the efficient tools and algorithms for our purposes.
We inform the reader that \cite{LacNes04} follows Pimsner's setting \cite{Pim97}, and the results about Cuntz-Pimsner algebras require a small reformulation to agree with the modern language of C*-correspondences set by Katsura \cite{Kat04}.
In \cite{Kat04} Katsura introduced a version of a Cuntz-Pimsner algebra that effectively treats non-injective C*-correspondences.
Katsura's Cuntz-Pimsner algebra has become a rather important C*-algebra.
A key feature is that it is the smallest relative Cuntz-Pimsner algebra that contains an isometric copy of the C*-correspondence \cite{Kak13-2}.
This information is crucial when associating the KMS states of $\O(A,\al)$ to those of $\T(A,\al)$ in the proof of Theorem \ref{T: KMS CP}.
Indeed the quotient mapping $q \colon \T(A,\al) \to \O(A,\al)$ does not interfere with the restriction of a KMS state on $A$, and so with it being tracial on $A$.

The tensor algebra of a C*-dynamical system in the sense of Muhly and Solel \cite{MuhSol98} consists of the lower triangular non-involutive part of $\T(A,\al)$.
The author and Katsoulis \cite{KakKat12} have shown that isometric isomorphisms of the tensor algebras is a complete invariant for unitary equivalent systems when $A=C(X)$.
On the other hand Davidson and Katsoulis \cite{DavKat11} show that piecewise conjugacy is implied by algebraic isomorphism of the tensor algebras.
The converse is known to hold in specific cases (e.g. $n=2,3$) and gives stronger isomorphisms by isometric maps \cite{DavKat11}.
The combination of \cite{DavKat11} and \cite{KakKat12} suggests that the converse in full generality could be achieved by proving that unitary equivalence and piecewise conjugacy are equivalent.
We note here that unitary equivalent systems are piecewise conjugate (e.g. Proposition \ref{P: un eq pc}), and it is the converse of this fact that concerns us.

It is immediate that unitary equivalent systems admit $*$-isomorphic Pimsner algebras.
Therefore an examination of the aforementioned problem can be conducted within the class of C*-algebras; in particular by analyzing the invariants of C*-algebras.
The form of the KMS states that we obtain is accommodating for this task.
As an application we show that Pimsner algebras of piecewise conjugate systems admit the same theory of KMS states (Corollary \ref{C: comm same KMS}).
This gives evidence that unitary equivalence and piecewise conjugacy may be equivalent.
However the same conclusion is derived for any equivalence relation that respects the orbit of a point in the sense of Lemma \ref{L: same sum comm}.
Even more this conclusion can be reformulated to cover non-classical systems as well: if $(A,\al)$ and $(C,\ga)$ are dynamical systems of the same multiplicity and there is a $*$-isomorphism $\wh{\phi} \colon A \to C$ such that
\[
\{\tau \al_w(a) \mid w\in \bF_+^d, |w| = m\} = \{\tau \wh{\phi}^{-1}\ga_w \wh{\phi}(a) \mid w\in \bF_+^d, |w| =m\}
\]
for all $a\in A$ and $m \in \bZ_+$, then $\T(A,\al)$ and $\T(C,\ga)$ (resp. $\O(A,\al)$ and $\O(C,\ga)$) admit the same KMS states in the strong sense of Corollary \ref{C: comm same KMS}.

\section{Preliminaries}

\subsection{Kubo-Martin-Schwinger states}

Let $\si \colon \bR \to \Aut(A)$ be an action on a C*-algebra $A$.
Then there exists a norm-dense $\si$-invariant $*$-subalgebra $A_{\textup{an}}$ of $A$ with the following property: for every $a\in A_{\textup{an}}$ the function $\bR \ni t \mapsto \si_t(a) \in A$ is analytically continued to an entire function $\bC \ni z \mapsto \si_z(a) \in A$ (see \cite[Proposition 2.5.22]{BraRob87}).
A state $\psi$ of $A$ is called a \emph{$(\si,\be)$-KMS state} if it satisfies
\[
\psi(a b) = \psi(b \si_{i\be}(a)),
\]
for all $a,b$ in a norm-dense $\si$-invariant $*$-subalgebra of $A_{\text{an}}$.
If $\be=0$ or if the action is trivial then a KMS state is a tracial state on $A$.

The KMS condition follows as an equivalent for the existence of particular continuous functions. More precisely, for $\be>0$ let 
\[
D = \{ z \in \bC \mid 0 < \im(z) < \be\}.
\]
Then a state $\psi$ is a $(\si,\be)$-KMS state if and only if for any pair $a,b \in A$ there exists a complex function $F_{a,b}$ that is analytic on $D$ and continuous (hence bounded) on $\ol{D}$ such that
\[
F_{a,b}(t) = \psi(a \si_t(b)) \qand F_{a,b}(t + i \be) = \psi(\si_t(b)a),
\]
for all $t\in \bR$ (see \cite[Proposition 5.3.7]{BraRob97}).

A state $\psi$ of a C*-algebra $A$ is called a \emph{ground state} if the function $z \mapsto \psi(a \si_{z}(b))$ is bounded on $\{z \in \bC \mid \text{Im}z >0\}$ for all $a,b$ inside a dense analytic subset $A_{\text{an}}$ of $A$.
A state $\psi$ of $\T(A,\al)$ is called a \emph{KMS$_\infty$ state} if it is the w*-limit of $(\si,\be)$-KMS states as $\be \longrightarrow \infty$.
The reader should be aware of the fact that this distinction is not apparent in \cite{BraRob87,BraRob97} and is coined in \cite{LacRae10, LRRW13}.

\subsection{C*-dynamical systems}\label{Ss: dyn sys}

A \emph{C*-dynamical system $(A,\al)$ of multiplicity $d$} consists of $d$ $*$-endomorphisms $\al_i$ of a C*-algebra $A$.
In particular when $A = C_0(X)$ for a locally compact Hausdorff space $X$ then each $\al_i$ is identified by a proper continuous map $\si_i \colon X \to X$.
In this case we will call $(A,\al) \equiv (X,\si)$ a \emph{multivariable classical system}.
We will say that $(A,\al)$ is \emph{unital} if $A$ has a unit and every $\al_i$ is unital for $i=1, \dots, d$.
We will say that $(A,\al)$ is \emph{non-degenerate} if all the $\al_i$ are non-degenerate for $i=1, \dots, d$.
We will say that $(A,\al)$ is \emph{injective} if $\cap_{i=1}^d \ker\al_i = (0)$.
The required ideal in $A$ for the covariant representations is coined by Katsura in \cite[Definition 3.2]{Kat04}.
It is defined by
\[
J_{(A,\al)} := (\cap_{i=1}^d \ker \al_i)^\perp.
\]
It is immediate that $(A,\al)$ is injective if and only if $J_{(A,\al)} = A$.

There is a C*-correspondence construction associated with $(A,\al)$.
It goes back to the work of Davidson and Katsoulis on classical systems \cite{DavKat11}, exploited further by Davidson and Roydor \cite{DavRoy10}, and by the author with Katsoulis \cite{KakKat11, KakKat12}.
It is not necessary for our purposes to review the whole theory of C*-correspondences.
Instead we will give the appropriate definitions in terms of $(A,\al)$, and provide brief comments to facilitate comparisons.

We denote by $\T(A,\al)$ the universal C*-algebra generated by the formal monomials $\Bv_w a$ with $a\in A$ and $w \in \bF_+^d$, such that $[\Bv_1, \dots, \Bv_d]$ is a row isometry and $a \Bv_i = \Bv_i \al_i(a)$ for all $a\in A$ and $i=1, \dots, d$.
We will refer to $\T(A,\al)$ as the Toeplitz-Pimsner algebra of $(A,\al)$.
The existence of $\T(A,\al)$ can be checked by using a general result of Loring \cite[Theorem 3.1.1]{Lor97}, or the usual C*-folklore technique; alternatively see \cite[Chapter 2]{DFK14}.
In particular there is a well known construction that gives (resp. faithful) representations of $\T(A,\al)$.
Let $\pi \colon A \to \B(H)$ be a (resp. faithful) representation of $A$.
On the Hilbert space $K = H \otimes \ell^2(\bF_+^d)$ let the orbit representation
\[
\wt{\pi}(a) = \diag\{\pi \al_{\ol{w}}(a) \mid w \in \bF_+^d\}, \foral a \in A,
\]
where $\ol{w} = \ol{i_m \dots i_1} = i_1 \dots i_m$ is the reversed word of $w = i_m \dots i_1$.
By defining $V_i \xi \otimes e_w = \xi \otimes e_{iw}$ we form a row isometry $[V_1, \dots, V_d]$.
Then the mappings
\[
a \mapsto \wt{\pi}(a) \qand \Bv_i \mapsto V_i
\]
lift to a (resp. faithful) representation $V \times \wt\pi$ of $\T(A,\al)$ onto the C*-algebra $\ca(\wt{\pi},V)$ generated by $V_w \wt{\pi}(a)$ for all $a\in A$ and $w \in \bF_+^d$.
Due to the relations on the generators we check that
\[
\ca(\wt\pi,V) = \ol{\spn} \{ V_\mu a V_\nu^* \mid a \in A, \mu, \nu \in \bF_+^d\}.
\]
Consequently, injectivity of $V \times \wt\pi$ when $\pi$ is faithful follows by a usual gauge-invariant-uniqueness-theorem argument as in \cite[Theorem 6.2]{Kat04}.
A simplified proof of \cite[Theorem 6.2]{Kat04} can be found in \cite{Kak13-2}.
From now on let us fix such a family $(\wt{\pi}, \{V_i\}_{i=1}^d)$ where $\pi$ is faithful.
Since $A$ embeds isometrically in $\T(A,\al)$ we will drop the use of $\wt{\pi}$.

\begin{remark}
Let us briefly describe the C*-correspondence associated with $(A,\al)$.
Full details are left to the interested reader.
Let the C*-correspondence $\E = \sum_{i=1}^d \E_i$ such that $\E_i = A$ is a C*-correspondence over $A$ endowed with the operations
\[
\sca{\xi,\eta} = \xi^*\eta \qand a \cdot \xi \cdot b = \al_i(x)\xi b
\]
for all $a,b \in A$ and $\xi, \eta \in \E_i = A$.
Then the Toeplitz-Pimsner algebra $\T(\E)$ is $*$-isomorphic to $\T(A,\al)$.
The reader is addressed to \cite[Theorem 2.10]{DavKat11} modulo \cite[Remark 8.4]{KakSha15}.
In the case of unital systems \cite[Theorem 2.10]{DavKat11} suffices.
These ideas extend to cover non-classical dynamical systems in general.
Alternatively one may check that the representation $V \times \wt\pi$ for a faithful $\pi\colon A \to \B(H)$ satisfies the required conditions of \cite[Proposition 6.1]{Kat04}, and thus gives a faithful representation of $\T(\E)$ as well.
Another suggestion is to use \cite[Theorem 2.1]{FowRae99}.
This line of reasoning requires some more elaboration on the construction of the C*-correspondence $\E$ associated with $(A,\al)$.

We just mention that the Cuntz-Pimsner algebra $\O(\E)$ in the sense of Katsura \cite[Definition 3.5]{Kat04} is $*$-isomorphic to the C*-algebra $\O(A,\al)$ we describe below.
The reader is addressed also to \cite{KakKat12}.
We remark that we make use of the gauge invariant uniqueness theorems in their general form as proven by Katsura \cite[Theorem 6.2 and Thoerem 6.4]{Kat04}.
The reader is addressed also to \cite{Kak13-2} for an alternative proof and an overview on the subject.
\end{remark}

We denote by $\O(A,\al)$ the Cuntz-Pimsner algebra related to $(A,\al)$, i.e. the quotient of $\T(A,\al)$ by the ideal generated by the differences
\[
a - \sum_{i=1}^d V_i \al_i(a) V_i^* = a(I - \sum_{i=1}^d V_i V_i^*), \foral a \in J_{(A,\al)}.
\]
We denote by $q \colon \T(A,\al) \to \O(A,\al)$ the quotient map.
Since $A$ embeds isometrically inside $\O(A,\al)$ we will make the identification $a \equiv q(a)$ for all $a\in A$.
Moreover we denote by $S_{\mu} = q(V_{\mu})$ for all $\mu \in \bF_+^d$.
Therefore $\O(A,\al)$ is the universal C*-algebra generated by the formal monomials $\Bs_w a$ with $a\in A$ and $w \in \bF_+^d$, such that $[\Bs_1, \dots, \Bs_d]$ is a row isometry, $a \Bs_i = \Bs_i \al_i(a)$ for all $a\in A$ and $i=1, \dots, d$, and
\[
a(I - \sum_{i=1}^d \Bs_i \Bs_i^*) = 0, \foral a \in J_{(A,\al)}.
\]
In particular, when $(A,\al)$ is injective then $\O(A,\al)$ is the universal C*-algebra generated by the $\Bs_i a$ such that $[\Bs_1, \dots, \Bs_d]$ is a row unitary and $a \Bs_i = \Bs_i \al_i(a)$ for all $a\in A$ and $i=1, \dots, d$.

When $(A,\al)$ is unital then $1 \in A$ is also the unit for $\T(A,\al)$ and $\O(A,\al)$.
If $(A,\al)$ is not unital then we form the unitization $(A^{(1)},\al^{(1)})$ so that $\al_i^{(1)}(a + \la) = \al_i(a) + \la$.
We denote by $A^{(1)} = A + \bC$ the unitization of $A$, even when $A$ has a unit.
We make the convention that $A^{(1)} = A$ only when $(A,\al)$ is unital.
Since the unitization is an extension of the original system we get that $\T(A,\al)$ can be realized as a C*-subalgebra of $\T(A^{(1)},\al^{(1)})$.
Indeed consider a Fock representation $(\wt{\pi},\{V_i\}_{i=1}^d)$ of $(A^{(1)},\al^{(1)})$, take $(\wt{\pi}|_A,\{V_i\}_{i=1}^d)$ and use the gauge invariant uniqueness theorem for the Toeplitz-Pimsner algebras.
On the other hand, when $A$ is not unital then $A$ is an essential ideal of $A^{(1)}$, and when $A$ is unital then $A^{(1)} = A \oplus \bC$.
Therefore we obtain that $J_{(A,\al)} = J_{(A^{(1)},\al^{(1)})}$.
By the gauge invariant uniqueness theorems we have that $\O(A,\al)$ is a C*-subalgebra of $\O(A^{(1)},\al^{(1)})$ respectively.

There is a well-known tail adding technique due to Muhly and Tomforde \cite{MuhTom04} for dilating a non-injective C*-correspondence $\E$ to an injective one $\X$ such that the Cuntz-Pimsner algebra $\O(\E)$ is a full corner of the Cuntz-Pimsner algebra $\O(\X)$.
This process follows a pattern similar to the tail adding technique for eliminating sources in graphs. 
The construction of Muhly and Tomforde \cite{MuhTom04} has found several applications, however it has also some limitations.
The author and Katsoulis have observed in \cite[Proposition 3.12]{KakKat11} that it does not preserve classes in the following sense.
There is an example of a non-injective C*-dynamical system whose dilation, as constructed in \cite{MuhTom04}, is not a C*-correspondence of an injective C*-dynamical system.
In order to tackle this problem the author and Katsoulis \cite{KakKat11} introduced a technique that takes into consideration a variety of different tails \cite[Theorem 3.10]{KakKat11}.
A careful choice of the appropriate tail then does the trick.
Among others this technique generalizes the one variable technique of the author \cite[Theorem 4.8]{Kak11-1} and fixes an error in the original technique of Davidson and Roydor \cite[Section 4]{DavRoy10} for classical multivariable systems, see \cite[Example 4.3]{KakKat11} and \cite[Example 4.4]{KakKat11} respectively.

Once more we will not require the full theory of C*-correspondences to review the dilation in the case of C*-dynamical systems.
More details may be found in \cite[Section 4]{KakKat11}.
Let $(A,\al)$ be a non-injective C*-dynamical system.
Let the direct sum C*-algebra
\[
C = C_0 \oplus (\oplus_{w \in \bF_+^d} C_w)
\]
where $C_0 = A$ and $C_w = A/J_{(A,\al)}$ for all $w\in \bF_+^d$.
Note that $C_0$ is not to be mistaken with $C_\mt$.
We then define the $*$-endomorphisms $\ga_i \colon C \to C$ such that
\[
\big(\ga_1(a,(c_w))\big)_\mu = \begin{cases} \al_1(a) & \qif \mu = 0, \\ q_J(a) & \qif \mu = \mt, \\ c_{1^{n}} & \qif \mu = 1^{n+1}, n \geq 0, \\ c_w & \qif \mu = 1 w, \mu \neq 1^n, n \geq 1, \\ 0 & \text{ otherwise}, \end{cases}
\]
where $q_J \colon A \to A/ J_{(A,\al)}$, and
\[
\big(\ga_i(a,(c_w))\big)_\mu = \begin{cases} \al_i(a) & \qif \mu = 0,\\ c_w & \qif \mu = i w, \\ 0 & \text{ otherwise}, \end{cases}
\]
for $i=2,\dots, d$.
For $d=2$ this is depicted in the following diagram
\[
\xymatrix@R=1.7em{
& & &\dots \ar@{-->}[d] & \dots \ar@{-->}[d] & \\
& & & C_{21} \ar@{-->}[d] &  C_{121} \ar[l] & \dots \ar[l] \\
A \ar@(u,ul)[] \ar@{-->}@(d,dl)[] \ar[r] & C_\mt \ar[rr] & & C_1 \ar[rr] & & \dots \\
& C_2 \ar@{-->}[u] & C_{12} \ar[l] & \dots \ar[l] & & \\
& \dots \ar@{-->}[u] & \dots \ar@{-->}[u] & & &
}
\]
where the solid arrows represent $\ga_1$ and the broken arrows $\ga_2$, with the understanding that we don't write the cases where the elements are sent to zero.
When $(A,\al)$ is non-degenerate then $(C,\ga)$ is non-degenerate as well; however when $(A,\al)$ is unital then $(C,\ga)$ is no longer unital.
We call $(C,\ga)$ \emph{the injective dilation of $(A,\al)$}.
(Here the $*$-endomorphisms $\{\ga_i\}_{i=1}^d$ should not be confused with the gauge action $\{\ga_z\}_{z \in \bT}$ on the Pimsner algebras.)

In particular when $d=1$ we write $\al_1 = \al$ and $\ga_1 = \ga$.
In this case we have that $C = A \oplus c_0(A/\ker\al^\perp)$ and
\[
\ga(a, (c_n)) = (\al(a), a + \ker\al^\perp, (c_n)).
\]
In turn we may extend the injective system $(C,\ga)$ of multiplicity $1$ to the direct limit automorphic system $(\wt{C},\wt{\ga})$ given by the diagram
\begin{align*}
\xymatrix@C=3em{
C \ar[r]^{\ga} \ar[d]^{\al} &
C \ar[r]^{\ga} \ar[d]^{\al} &
C \ar[r]^{\ga} \ar[d]^{\al} &
\cdots \ar[r] &
\wt{C} \ar[d]^{\wt{\ga}} \\
C \ar[r]^{\ga} &
C \ar[r]^{\ga} &
C \ar[r]^{\ga} &
\cdots \ar[r] &
\wt{C}
}
\end{align*}
Then we have that $\O(C,\ga) = \O(\wt{C},\wt{\ga}) = \wt{C} \rtimes_{\wt{\ga}} \bZ$.
The system $(\wt{C},\wt{\ga})$ is called \emph{the automorphic dilation of $(A,\al)$}.
The interested reader is addressed to \cite[Section 4]{Kak11-1} for the pertinent details.

Finally we mention that we will write $|\mu| = n$ for the length of a word $\mu = \mu_n \dots \mu_1 \in \bF_+^d$.

\section{KMS states on Pimsner algebras}

\subsection{The Toeplitz-Pimsner algebra}

Given the gauge action $\{\ga_z\}_{z \in \bT}$ of $\T(A,\al)$ we let $\si \colon \bR \to \Aut(\T(A,\al))$ be the action with $\si_t = \ga_{e^{it}}$.
The linear span of the monomials $V_\mu a V_\nu^*$, with $\mu,\nu \in \bF_+^d$ and $a \in A$, is dense in $\T(A,\al)$, and
\[
\si_t (V_\mu a V_\nu^*) = e^{i(|\mu| - |\nu|)t} V_\mu a V_\nu^*.
\]
Since the function $t \mapsto \si_t(V_\mu a V_\nu^*)$ extends to the entire function
\[
z \mapsto e^{i(|\mu| - |\nu|)z} V_\mu a V_\nu^*,
\]
then the $(\si,\be)$-KMS condition for a state $\psi$ is equivalent to
\begin{align*}
\psi(V_\mu a V_\nu^* \cdot V_\ka b V_\la^*)
& =
\psi(V_\ka b V_\la^* \cdot \si_{i\be} (V_\mu a V_\nu^*)) \\
& =
e^{-(|\mu| - |\nu|)\be} \psi(V_\ka b V_\la^* \cdot V_\mu a V_\nu^*)
\end{align*}
for all $a,b \in A$ and $\mu, \nu, \ka, \la \in \bF_+^d$.
Let us begin with the following conditions on the form of KMS states.

\begin{proposition}\label{P: cond KMS}
Let $(A,\al)$ be a unital C*-dynamical system with multiplicity $d$.

\begin{inparaenum}
\item Suppose that $\be < \log d$.
Then $\T(A,\al)$ has no $(\si,\be)$-KMS states.

\item Suppose that $\be \geq \log d$ and $\be > 0$.
Then $\psi$ is a $(\si,\be)$-KMS state for $\T(A,\al)$ if and only if $\psi|_A$ is tracial and
\[
\psi(V_\mu a V_{\nu}^*) = \de_{\mu, \nu} e^{-|\mu|\be} \psi(a), 
\]
for all $a \in A$ and $\mu, \nu \in \bF_+^d$.
\end{inparaenum}
\end{proposition}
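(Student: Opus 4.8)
I would prove (ii) first and then derive (i) from it. For (ii), the forward direction is the substantive part. Assume $\psi$ is a $(\si,\be)$-KMS state. Applying the KMS condition with $b = V_\nu^* \cdot$ suitably chosen gives $\psi(ab) = \psi(ba)$ for $a,b\in A$ since $A$ is fixed by $\si$, so $\psi|_A$ is tracial immediately. Next I would show $\psi(V_\mu a V_\nu^*) = 0$ when $|\mu| \neq |\nu|$: the entire extension of $t\mapsto \si_t(V_\mu a V_\nu^*)$ is $e^{i(|\mu|-|\nu|)z}V_\mu a V_\nu^*$, and a standard argument (using that $F_{1,V_\mu a V_\nu^*}(z) = e^{i(|\mu|-|\nu|)z}\psi(V_\mu aV_\nu^*)$ must be bounded on the strip $\ol D$) forces $\psi(V_\mu aV_\nu^*)=0$ unless $|\mu|=|\nu|$. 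Then for $|\mu|=|\nu|$ I would use the covariance relation $a V_i = V_i\al_i(a)$ and the KMS identity $\psi(V_\mu a V_\nu^*\cdot V_\ka bV_\la^*) = e^{-(|\mu|-|\nu|)\be}\psi(V_\ka bV_\la^*\cdot V_\mu aV_\nu^*)$ with cleverly chosen words to reduce the length of $\mu,\nu$ one letter at a time: writing $V_\mu = V_{\mu'}V_j$ and inserting $V_j^* V_j = I$, one gets $\psi(V_{\mu'}V_j a V_j^* V_{\nu'}^*)$, and comparing with $\psi$ applied after cycling $V_j^* V_{\nu'}^* V_{\mu'} V_j$ via the KMS relation produces a recursion $\psi(V_\mu a V_\nu^*) = \de_{\mu,\nu}e^{-\be}\psi(\text{something of length }|\mu|-1)$. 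Iterating down to length $0$ and tracking the $\al_i$'s that appear yields the displayed formula (the surviving term being $\psi$ of an element of $A$, which by traciality equals $\psi(a)$ after absorbing the endomorphisms — one must check the endomorphism factors cancel, using $\al_j(1)=1$ from unitality).

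**The other direction and part (i).** Conversely, given that $\psi|_A$ is tracial and $\psi(V_\mu aV_\nu^*) = \de_{\mu,\nu}e^{-|\mu|\be}\psi(a)$, I would verify the KMS identity directly on the spanning monomials $V_\mu a V_\nu^*$: expand both $\psi(V_\mu aV_\nu^*\cdot V_\ka bV_\la^*)$ and $e^{-(|\mu|-|\nu|)\be}\psi(V_\ka bV_\la^*\cdot V_\mu aV_\nu^*)$ using the multiplication rules in $\T(A,\al)$ (which reduce products $V_\nu^* V_\ka$ to either $V_{\ka''}$, $V_{\nu''}^*$, or a scalar/element of $A$ depending on prefix relations among $\nu,\ka$), apply the formula to each resulting monomial, and use traciality of $\psi|_A$ to match the two sides. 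This is a bookkeeping computation with several cases according to how the words overlap. For part (i), suppose $\be < \log d$ and $\psi$ is a $(\si,\be)$-KMS state; if $\be > 0$ I can invoke (ii) to get $\psi(V_i V_i^*) = e^{-\be}\psi(1)$ for each $i$, and summing over $i=1,\dots,d$ and using $\sum_i V_iV_i^* \leq I$ gives $d\,e^{-\be} = \sum_i\psi(V_iV_i^*) \leq \psi(I) = 1$, i.e. $\be \geq \log d$, a contradiction. For $\be \leq 0$ (which includes the degenerate case), a separate short argument is needed: $\be = 0$ gives a trace, and then $\psi(V_i^* V_i) = \psi(1) = 1$ while $\psi(V_iV_i^*) = \psi(V_i^*V_i) = 1$ by traciality, so $\psi(I - V_iV_i^*) = 0$ for all $i$, whence $\psi(\sum_i V_iV_i^*) = d$, contradicting $\sum_i V_iV_i^* \leq I$ when $d > 1$; for $\be < 0$, the same length-reduction recursion from (ii)'s proof still applies with the exponential now growing, which forces $\psi(V_\mu aV_\nu^*)$ to be unbounded unless it vanishes, and one derives a contradiction at $\mu=\nu=i$ similarly. (One should handle $d=1$, $\be\le 0$ with the remark that then there is no constraint, but the statement $\be < \log 1 = 0$ still yields a contradiction via the growing recursion.)

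**Main obstacle.** The delicate point is the length-reduction recursion in the forward direction of (ii): choosing the right auxiliary words $\ka,\la$ so that the KMS identity collapses $\psi(V_\mu aV_\nu^*)$ onto a strictly shorter monomial with the correct scalar $e^{-\be}$, and then verifying that after $|\mu|$ iterations the accumulated composition of endomorphisms $\al_{\mu_1}\cdots$ applied to $a$ is killed by traciality back down to $\psi(a)$ — this hinges essentially on unitality (so that $V_j^* a V_j$ "is" $\al_j(a)$ in the relevant sense and $\al_j(1) = 1$), and on the hypothesis $\be > 0$ to guarantee the strip $D$ is nonempty so that the analytic-function characterization of KMS states is available for the vanishing argument when $|\mu|\neq|\nu|$. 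Getting all the exponents and word manipulations consistent is where the real care is required; everything else is routine expansion.
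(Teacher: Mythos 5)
There is a real flaw in the forward direction of (ii). You claim that $\psi(V_\mu a V_\nu^*)=0$ for $|\mu|\neq|\nu|$ because $F_{1,\,V_\mu aV_\nu^*}(z)=e^{i(|\mu|-|\nu|)z}\,\psi(V_\mu aV_\nu^*)$ ``must be bounded on $\ol D$''. But $|e^{ikz}|=e^{-k\im(z)}$ is automatically bounded on the horizontal strip $0\le \im(z)\le\be$ of finite width, so boundedness on $\ol D$ gives no information at all: you have transplanted the ground-state criterion (boundedness on the entire upper half-plane, where $e^{|k|t}$ genuinely blows up) into a setting where it is vacuous. The correct input is the second boundary condition of the KMS characterization, $F_{1,b}(t+i\be)=\psi(\si_t(b)\cdot 1)$: comparing the analytic continuation $e^{i(|\mu|-|\nu|)(t+i\be)}\psi(V_\mu aV_\nu^*)$ with $e^{i(|\mu|-|\nu|)t}\psi(V_\mu aV_\nu^*)$ forces the vanishing when $\be\neq0$. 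Equivalently, and this is what the paper does, apply the KMS identity twice: $\psi(V_\mu a)=\psi(a\,\si_{i\be}(V_\mu))=e^{-|\mu|\be}\psi(aV_\mu)=e^{-|\mu|\be}\psi(V_\mu a)$, whence $\psi(V_\mu a)=0$ for $\mt\neq\mu$ and $\be>0$; one further application gives $\psi(V_\mu aV_\nu^*)=e^{-|\mu|\be}\psi(aV_\nu^*V_\mu)$, and the combinatorics of $V_\nu^*V_\mu$ finish the argument. As written, your proof of the vanishing step does not go through.

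Two further points, less serious. Your length-reduction recursion for $|\mu|=|\nu|$ is unnecessary: since $[V_1,\dots,V_d]$ is a row isometry, $V_\nu^*V_\mu=\de_{\nu,\mu}$ for words of equal length, so the single identity $\psi(V_\mu aV_\nu^*)=e^{-|\mu|\be}\psi(aV_\nu^*V_\mu)$ already yields $\de_{\mu,\nu}e^{-|\mu|\be}\psi(a)$ with no accumulated endomorphisms to cancel; the ``main obstacle'' you describe is an artifact of an over-complicated route. Similarly, your three-way case split for (i) (deduce from (ii) when $\be>0$, then treat $\be=0$ and $\be<0$ separately, the last of these only sketched) can be replaced by the paper's uniform one-line argument valid for every real $\be$: one application of the KMS condition gives $\psi(V_iV_i^*)=\psi(V_i^*\si_{i\be}(V_i))=e^{-\be}$, and $\sum_{i=1}^dV_iV_i^*\le 1$ then forces $e^{-\be}d\le 1$, i.e. $\be\ge\log d$. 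Your converse verification in (ii) is the same case-by-case expansion as the paper's and is fine in outline.
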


\begin{proof}
Let $\psi$ be a $(\si,\be)$-KMS state of $\T(A,\al)$.
Let the unit $1 \in A$, which is also a unit for $\T(A,\al)$.
Then we obtain
\begin{align*}
1 = \psi(1)
& \geq
\sum_{k=1}^d \psi(V_i V_i^*)
 =
\sum_{k=1}^d \psi(V_i^* \si_{i \be}(V_i))
 =
e^{-\be} \sum_{i=1}^d \psi(1)
=
e^{-\be} d,
\end{align*}
where we used that $1 - \sum_{i=1}^d V_iV_i^* \geq 0$; thus $\be \geq \log d$.

For item (ii) fix $\be \geq \log d$ such that $\be > 0$ and suppose first that $\psi$ is a $(\si,\be)$-KMS state.
Since $\si_t|_A = \id_A$ we obtain that $\psi|_A$ is tracial.
By applying the KMS condition twice we have that
\[
\psi(V_\mu a) = \psi(a \si_{i\be}(V_\mu)) = e^{-|\mu|\be} \psi(V_\mu a),
\]
therefore $\psi(V_\mu a) = 0$ for all $a \in A$ and $\mt \neq \mu \in \bF_+^d$.
By taking adjoints we have that $\psi(a V_\mu^*) = 0$ for all $a \in A$ and $\mt \neq \mu \in \bF_+^d$.
Similarly we have that $\psi(a V_\mu) = 0$ for all $a \in A$ and $\mt \neq \mu \in \bF_+^d$.
Therefore we have that
\begin{align*}
\psi(V_\mu a V_\nu^*)
=
\psi(a V_\nu^* \si_{i\be}(V_\mu))
=
e^{-|\mu|\be}\psi(a V_\nu^* V_\mu)
=
\de_{\mu, \nu} e^{-|\mu|\be} \psi(a)
\end{align*}
for all $a \in A$ and $\mu,\nu \in \bF_+^d$.
Thus a $(\si,\be)$-KMS state must have the form of the statement.

Conversely suppose that $\psi$ is as in the statement for some $\be \geq \log d$. We will verify the KMS condition. To this end let $f = V_\mu a V_\nu^*$ and $g = V_\ka b V_\la^*$. A direct computation shows that
\begin{align*}
fg
& =
\begin{cases}
V_\mu a V_{\nu'}^* b V_\la^* & \text{ when } \nu = \ka \nu', \\
V_\mu a V_{\ka'} b V_\la^* & \text{ when } \ka = \nu \ka', \\
0 & \text{ otherwise,}
\end{cases} \\
& =
\begin{cases}
V_\mu a \al_{\ol{\nu'}} (b)V_{\la\nu'}^*& \text{ when } \nu = \ka \nu', \\
V_{\mu \ka'} \al_{\ol{\ka'}}(a) b V_\la^* & \text{ when } \ka = \nu \ka', \\
0 & \text{ otherwise.}
\end{cases}
\end{align*}
In a similar way we have that
\begin{align*}
gf
& =
\begin{cases}
V_\ka b \al_{\ol{\la'}} (a) V_{\nu\la'}^*& \text{ when } \la = \mu \la', \\
V_{\ka \nu'} \al_{\ol{\mu'}}(b) a V_\nu^* & \text{ when } \mu = \la \mu', \\
0 & \text{ otherwise.}
\end{cases}
\end{align*}
Referring to the comments preceding the statement, we aim to show that $\psi(fg) = e^{-(|\mu|-|\nu|)\be} \psi(gf)$.

First we deal with the case where $\nu = \ka \nu'$ and $\la = \mu \la'$.
By assumption we have that
\begin{align*}
\psi(fg) = \de_{\mu, \la \nu'} e^{-|\mu|\be} \psi(a \al_{\ol{\nu'}}(b)),
\end{align*}
and
\begin{align*}
\psi(gf) = \de_{\ka, \nu \la'} e^{-|\ka| \be} \psi(b \al_{\ol{\la'}}(a)) = \de_{\ka, \nu \la'} e^{-|\ka| \be} \psi(\al_{\ol{\la'}}(a) b),
\end{align*}
since $\psi|_A$ is tracial.

\smallskip

\noindent {\bf Claim.} Under the assumption that $\nu = \ka \nu'$ and $\la = \mu \la'$, we have that $\mu = \la \nu'$ if and only if $\ka = \nu\la'$.
Each of them implies that $\nu' = \la' = \mt$, $\mu = \la$ and $\ka = \nu$.

\smallskip

\noindent {\bf Proof of the Claim.} If $\mu = \la \nu'$ then we obtain that $\la = \mu \la' = \la \nu' \la'$.
Therefore $\nu' = \la' = \emptyset$ and as a consequence we obtain that $\nu = \ka \nu' = \ka$, thus $\ka = \nu \la'$.
The converse follows by symmetry which completes the proof of the claim.

\smallskip

\noindent Therefore if $\mu = \la \nu'$ or $\ka = \nu\la'$ then we have that $\psi(fg) = e^{-|\mu|\be} \psi(a b)$ and
\begin{align*}
e^{-(|\mu|-|\nu|)\be} \psi(gf)
& =
e^{-(|\mu| - |\nu|) \be} e^{-|\ka| \be} \psi(a b)
=
e^{-|\mu|\be} \psi(a b),
\end{align*}
since $|\mu| - |\nu| + |\ka| = |\mu|$.
Otherwise we get that $\psi(fg) = 0 = \psi(gf)$ which satisfies the KMS condition trivially.

\medskip

Secondly we deal with the case where $\nu = \ka \nu'$ and $\mu = \la \mu'$.
By assumption we have that
\begin{align*}
\psi(fg) = \de_{\mu, \la \nu'} e^{-|\mu|\be} \psi(a \al_{\ol{\nu'}}(b)),
\end{align*}
and
\begin{align*}
\psi(gf) = \de_{\nu, \ka\nu'} e^{-|\nu| \be} \psi(\al_{\ol{\mu'}}(b) a) = \de_{\nu, \ka\nu'} e^{-|\nu| \be} \psi(a \al_{\ol{\mu'}}(b)),
\end{align*}
since $\psi|_A$ is tracial.

\smallskip

\noindent {\bf Claim.} Under the assumption that $\nu = \ka \nu'$ and $\mu = \la \mu'$, we have that $\mu = \la \nu'$ if and only if $\nu = \ka\mu'$. Each of them implies that $\nu' = \mu'$.

\smallskip

\noindent {\bf Proof of the Claim.} If $\mu = \la \nu'$ then $\la \mu' = \la \nu'$, which implies that $\mu' = \nu'$; thus $\nu = \ka \nu' = \ka \mu'$.
The converse follows by symmetry and the proof of the claim is complete.

\smallskip

\noindent Therefore if $\mu = \la \nu'$ or $\nu = \ka\mu'$ we obtain that $\psi(fg) = e^{-|\mu| \be} \psi(a \al_{\ol{\nu'}}(b))$ and
\begin{align*}
e^{-(|\mu|-|\nu|)\be} \psi(gf)
=
e^{-(|\mu| - |\nu|)\be} e^{-|\nu| \be} \psi(a \al_{\ol{\mu'}}(b))
=
e^{-|\mu| \be} \psi(a \al_{\ol{\nu'}}(b)).
\end{align*}
Otherwise we have that $\psi(fg) = 0 = \psi(gf)$ and the KMS condition is satisfied trivially.

\medskip

The case where $\ka = \nu \ka'$ reduces to the previous computation by substituting the roles of $f$ and $g$ by $g^*$ and $f^*$ respectively, for which we have just showed that the KMS equation holds.
Finally in the above computations we also established that if $\nu \neq \ka \nu'$ or $\ka \neq \nu \ka'$ then we cannot have that $\la = \mu \la'$ and $\ka = \nu \la'$, or that $\mu= \la \mu'$ and $\ka \mu' = \nu$.
In these cases the KMS condition is satisfied trivially.
\end{proof}

We mention here a subtle point in item (ii) of Proposition \ref{P: constr KMS}.
The only case where a KMS state does not fall in this characterization is when $d = 1$ and $\be = \log d = 0$.
However in this case a KMS state is simply a tracial state and we will exhibit a different path.
On the other hand when $d > 1$ then the arguments of item (ii) of Proposition \ref{P: constr KMS} still hold for the critical temperature $\be =\log d > 0$.
We will leave these cases for later.
At this point we continue with our examination for the case $\be > \log d$, where we may apply item (ii) of Proposition \ref{P: constr KMS} for any finite multiplicity $d$.

\begin{proposition}\label{P: constr KMS}
Let $(A,\al)$ be a unital C*-dynamical system of multiplicity $d$ and let $\be > \log d$.
Then for every tracial state $\tau$ of $A$ there exists a $(\si, \be)$-KMS state $\psi_\tau$ of $\T(A,\al)$ such that
\begin{align*}
\psi_\tau(V_\mu a V_\nu^*)
=
\de_{\mu, \nu} \cdot (1 - e^{-\be}d) \cdot
\sum_{m=0}^\infty e^{-(m + |\mu|)\be} \sum_{|w| = m} \tau\al_{\ol{w}}(a),
\end{align*}
for all $a\in A$ and $\mu, \nu \in \bF_+^d$.
\end{proposition}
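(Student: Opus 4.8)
The plan is to build $\psi_\tau$ explicitly as a weighted sum of vector states in the orbit representation associated with $\tau$, and then to recognise it as a KMS state via Proposition \ref{P: cond KMS}(ii).

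Let $(\pi_\tau,H_\tau,\xi_\tau)$ be the GNS representation of the tracial state $\tau$, and form the orbit representation $V\times\wt{\pi}_\tau$ of $\T(A,\al)$ on $K=H_\tau\otimes\ell^2(\bF_+^d)$ as in Subsection \ref{Ss: dyn sys}, so that $\wt{\pi}_\tau(a)(\xi\otimes e_w)=\pi_\tau\al_{\ol{w}}(a)\xi\otimes e_w$ and $V_i(\xi\otimes e_w)=\xi\otimes e_{iw}$; no faithfulness of $\pi_\tau$ is needed here. Since $\be>\log d$ we have $de^{-\be}<1$, whence $\sum_{w\in\bF_+^d}e^{-|w|\be}=\sum_{m\geq0}d^me^{-m\be}=(1-de^{-\be})^{-1}<\infty$. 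I would then set
\[
\psi_\tau(x):=(1-de^{-\be})\sum_{w\in\bF_+^d}e^{-|w|\be}\,\sca{(V\times\wt{\pi}_\tau)(x)(\xi_\tau\otimes e_w),\ \xi_\tau\otimes e_w},\qquad x\in\T(A,\al).
\]
Each $\xi_\tau\otimes e_w$ is a unit vector, so every summand is a state precomposed with a unital representation; as the weights are nonnegative and sum to $1$, the series converges absolutely for each $x$, $\psi_\tau$ is positive with $\psi_\tau(1)=1$, hence a state on $\T(A,\al)$.

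Next I would evaluate $\psi_\tau$ on the spanning monomials. Working in the orbit representation, one checks directly from the relations that
\[
V_\mu a V_\nu^*(\xi_\tau\otimes e_w)=\begin{cases}\pi_\tau\al_{\ol{w'}}(a)\xi_\tau\otimes e_{\mu w'} & \text{if }w=\nu w',\\ 0 & \text{otherwise.}\end{cases}
\]
Thus the diagonal coefficient $\sca{V_\mu a V_\nu^*(\xi_\tau\otimes e_w),\xi_\tau\otimes e_w}$ is nonzero only when $w=\nu w'=\mu w'$, i.e. only when $\mu=\nu$ by cancellation in the free monoid, in which case it equals $\tau\al_{\ol{w'}}(a)$. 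Reindexing the surviving terms by $w=\mu w'$ and then grouping the words $w'$ according to their length gives
\[
\psi_\tau(V_\mu a V_\nu^*)=\de_{\mu,\nu}\,(1-de^{-\be})\sum_{m=0}^\infty e^{-(m+|\mu|)\be}\sum_{|w|=m}\tau\al_{\ol{w}}(a),
\]
which is exactly the asserted formula; in particular, comparing with the case $\mu=\nu=\mt$, the right-hand side equals $\de_{\mu,\nu}e^{-|\mu|\be}\psi_\tau(a)$.

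It remains to see that $\psi_\tau|_A$ is tracial, which would then let Proposition \ref{P: cond KMS}(ii) do the rest. For $a,b\in A$ and $w\in\bF_+^d$, since $\al_{\ol{w}}$ is a unital $*$-endomorphism and $\tau$ is tracial we have $\tau\al_{\ol{w}}(ab)=\tau\big(\al_{\ol{w}}(a)\al_{\ol{w}}(b)\big)=\tau\big(\al_{\ol{w}}(b)\al_{\ol{w}}(a)\big)=\tau\al_{\ol{w}}(ba)$, so summing against the weights yields $\psi_\tau(ab)=\psi_\tau(ba)$. Since $\be>\log d\geq0$, Proposition \ref{P: cond KMS}(ii) applies to a state that is tracial on $A$ and satisfies $\psi_\tau(V_\mu a V_\nu^*)=\de_{\mu,\nu}e^{-|\mu|\be}\psi_\tau(a)$, and concludes that $\psi_\tau$ is a $(\si,\be)$-KMS state. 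The only real bookkeeping is the re-indexing of the sum in the middle step — keeping track of which words contribute to each diagonal matrix coefficient and repacking the geometric series into the double sum of the statement; conceptually there is no obstacle, since Proposition \ref{P: cond KMS} already isolates the functionals that can be KMS, and convergence of the defining series is exactly what the hypothesis $\be>\log d$ secures.
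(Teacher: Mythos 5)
Your construction is exactly the one in the paper: the same weighted sum of vector states $(1-de^{-\be})\sum_{w}e^{-|w|\be}\sca{\,\cdot\,(\xi_\tau\otimes e_w),\xi_\tau\otimes e_w}$ in the orbit (Fock) representation built from the GNS triple of $\tau$, the same evaluation on the monomials $V_\mu aV_\nu^*$, and the same appeal to Proposition \ref{P: cond KMS}(ii) to conclude the KMS property. The only cosmetic difference is that you spell out the traciality of $\psi_\tau|_A$, which the paper leaves implicit; the argument is correct.
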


\begin{proof}
Fix a tracial state $\tau$ of $A$ and let $(H_\tau, \pi_\tau, \xi_\tau)$ be the GNS construction associated with $\tau$.
Then the pair $(\wt{\pi_\tau}, V_\tau)$ on $H_\tau \otimes \ell^2(\bF_+^d)$ defines a representation $V_\tau \times \wt{\pi_\tau}$ of $\T(A,\al)$.
For every word $\ka \in \bF_+^d$ let the vector states
\[
\psi_\ka(f) = \sca{(V_\tau \times \wt{\pi_\tau})(f) \xi_\tau \otimes e_\ka, \xi_\tau \otimes e_\ka}, \foral f \in \T(A,\al).
\]
We define the functional $\psi_\tau \colon \T(A,\al) \to \bC$ by
\[
\psi_\tau(f):= (1 - e^{-\be}d) \sum_{m=0}^\infty e^{-m\be} \sum_{|\ka|=m} \psi_\ka(f).
\]
Indeed, since $e^{-\be} d< 1$ we have that $\psi_\tau$ is well defined as a norm limit of vector states.
Moreover $\psi_\tau$ is positive since every finite sum is so as a sum of positive vector states.
If $f = 1_A$ then we get $\psi_\ka(1_A) = 1$ hence
\begin{align*}
\psi_\tau(1_A)
& =
(1 - e^{-\be} d) \sum_{m=0}^\infty e^{-m\be} \sum_{|\ka|=m} \psi_\ka(1_A) \\
& =
(1 - e^{-\be} d) \sum_{m=0}^\infty e^{-m\be} d^m \\
& =
(1 - e^{-\be} d) \sum_{m=0}^\infty (e^{-\be} d)^m = 1,
\end{align*}
since by assumption $e^{-\be} d< 1$.
Thus $\psi_\tau$ is a state on $\T(A,\al)$.

Next we show that $\psi_\tau$ is as in the statement.
For $f = V_\mu a V_\nu^*$ with $\mu \neq \nu$ we directly verify that $\psi_\ka(f) = 0$. 
For $f = V_\mu a V_\mu^*$ we have that
\begin{align*}
\psi_\ka(f)
& =
\sca{(V_\tau)_\mu \, \wt{\pi_\tau}(a) \, (V_\tau)_\mu^* \, \xi_\tau \otimes e_{\ka}, \xi_\tau \otimes e_{\ka}} \\
& =
\sca{\widetilde{\pi_\tau}(a) \, (V_\tau)_\mu^* \, \xi_\tau \otimes e_\ka, (V_\tau)_\mu^* \, \xi_\tau \otimes e_\ka} \\
& =
\begin{cases}
\sca{\pi_\tau\al_{\ol{\ka'}}(a) \xi_\tau, \xi_\tau} & \text{ when } \ka = \mu \ka', \\
0 & \text{ otherwise},
\end{cases} \\
& =
\begin{cases}
\tau\al_{\ol{\ka'}}(a) & \text{ when } \ka = \mu \ka', \\
0 & \text{ otherwise}.
\end{cases}
\end{align*}
Therefore we obtain that
\begin{align*}
\psi_\tau(f)
& =
(1 - e^{-\be} d) \sum_{m=0}^\infty e^{-m\be} \sum_{|\ka|=m} \psi_\ka(f) \\
& =
(1 - e^{-\be} d) \sum_{m=|\mu|}^\infty e^{-m\be} \sum_{|\ka|=m, \, \ka = \mu \ka', \ka' \in \bF_+^d} \tau\al_{\ol{\ka'}}(a) \\
& =
(1 - e^{-\be} d) \sum_{m=0}^\infty e^{-(m + |\mu|)\be} \sum_{|w|=m} \tau\al_{\ol{w}}(a).
\end{align*}
Finally we show that $\psi_\tau$ is a $(\si,\be)$-KMS state.
A direct computation shows that
\[
e^{-|\mu|\be} \psi_\tau(a) = e^{-|\mu| \be} (1 - e^{-\be} d) \sum_{m=0}^\infty e^{-m\be} \sum_{|w|=m} \tau\al_{\ol{w}}(a) = \psi_\tau(V_\mu a V_\mu^*),
\]
and the proof is then completed by Proposition \ref{P: cond KMS}.
\end{proof}

In the following theorem we show that the $(\si,\be)$-KMS states are exactly of this form when $\be > \log d$.

\begin{theorem}\label{T: KMS TP}
Let $(A,\al)$ be a unital C*-dynamical system of multiplicity $d$ and $\be > \log d$.
Then there is an affine weak*-homeomorphism $\tau \mapsto \psi_\tau$ from the simplex of the tracial states on $A$ onto the simplex of the $(\si,\be)$-KMS states on $\T(A,\al)$ with
\begin{align*}
\psi_\tau(V_\mu a V_\nu^*)
=
\de_{\mu, \nu} \cdot (1 - e^{-\be}d) \cdot
\sum_{m=0}^\infty e^{-(m + |\mu|)\be} \sum_{|w| = m} \tau\al_{\ol{w}}(a),
\end{align*}
for all $a\in A$ and $\mu, \nu \in \bF_+^d$.
\end{theorem}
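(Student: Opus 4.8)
The plan is to combine the two results already in hand. Proposition \ref{P: constr KMS} produces, for each tracial state $\tau$ of $A$, the $(\si,\be)$-KMS state $\psi_\tau$ with exactly the displayed formula; and Proposition \ref{P: cond KMS}(ii) tells us that \emph{every} $(\si,\be)$-KMS state $\psi$ of $\T(A,\al)$ has $\psi|_A$ tracial and satisfies $\psi(V_\mu a V_\nu^*) = \de_{\mu,\nu} e^{-|\mu|\be}\psi(a)$. The assignment $\tau \mapsto \psi_\tau$ is visibly affine, so what remains is to exhibit an inverse defined on the whole KMS simplex and to make the resulting bijection a weak*-homeomorphism. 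First I would record that $p := 1 - \sum_{i=1}^d V_iV_i^*$ is a positive element of $\T(A,\al)$ which commutes with $A$: from $aV_i = V_i\al_i(a)$ one gets $V_i^* a = \al_i(a) V_i^*$, hence $a \sum_{i=1}^d V_iV_i^* = \sum_{i=1}^d V_i\al_i(a)V_i^* = \sum_{i=1}^d V_iV_i^* a$. Then, for a $(\si,\be)$-KMS state $\psi$, set
\[
\tau_\psi(a) := \frac{1}{1 - e^{-\be}d}\, \psi(a p) = \frac{1}{1 - e^{-\be}d}\Big( \psi(a) - e^{-\be} \sum_{i=1}^d \psi(\al_i(a)) \Big),
\]
using $a p = a - \sum_{i=1}^d V_i\al_i(a) V_i^*$ and $\psi(V_i\al_i(a)V_i^*) = e^{-\be}\psi(\al_i(a))$ from Proposition \ref{P: cond KMS}(ii); the hypothesis $\be > \log d$ guarantees $1 - e^{-\be}d > 0$.

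Next I would verify that $\tau_\psi$ is a tracial state of $A$. Normalisation is immediate: $\tau_\psi(1) = (1 - e^{-\be}d)^{-1}\psi(p) = 1$ since $\psi(\sum_i V_iV_i^*) = e^{-\be}d$. The tracial identity for $\tau_\psi$ descends from $\psi|_A$ being tracial together with each $\al_i$ being a $*$-homomorphism. Positivity follows from the identity
\[
a^*a - \sum_{i=1}^d V_i\al_i(a^*a)V_i^* = a^*a\, p = a^* p\, a \geq 0,
\]
where the middle equality uses covariance and the last uses the centrality of $p$ over $A$; hence $\tau_\psi(a^*a) = (1 - e^{-\be}d)^{-1}\psi(a^* p\, a) \geq 0$.

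Then I would show that $\psi \mapsto \tau_\psi$ is a two-sided inverse of $\tau \mapsto \psi_\tau$. In one direction, substituting $\psi_\tau$ into the formula for $\tau_{(\cdot)}$ and using the word-reindexing identity $\{ \al_{\ol w}\circ\al_i : |w| = m,\ 1 \le i \le d \} = \{ \al_{\ol v} : |v| = m+1 \}$, the series telescopes and yields $\tau_{\psi_\tau} = \tau$; in particular $\tau \mapsto \psi_\tau$ is injective. In the other direction, for an arbitrary $(\si,\be)$-KMS state $\psi$ the definition of $\tau_\psi$ rearranges to the identity of functionals on $A$
\[
\psi|_A = (1 - e^{-\be}d)\, \tau_\psi + e^{-\be}\, R(\psi|_A), \qquad R(\rho) := \sum_{i=1}^d \rho \circ \al_i,
\]
and iterating $N$ times gives
\[
\psi|_A = (1 - e^{-\be}d) \sum_{m=0}^{N-1} e^{-m\be} R^m(\tau_\psi) + e^{-N\be} R^N(\psi|_A).
\]
Since $R$ sends states of $A$ to positive functionals of norm $d$, the remainder has norm at most $(e^{-\be}d)^N$, which tends to $0$ precisely because $\be > \log d$; letting $N \to \infty$ and noting that $R^m(\tau)(a) = \sum_{|w|=m}\tau\al_{\ol w}(a)$ yields $\psi|_A = \psi_{\tau_\psi}|_A$. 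By Proposition \ref{P: cond KMS}(ii) both $\psi$ and $\psi_{\tau_\psi}$ are determined on the dense span of the $V_\mu a V_\nu^*$ by their restrictions to $A$, so $\psi = \psi_{\tau_\psi}$; thus $\tau \mapsto \psi_\tau$ is onto the $(\si,\be)$-KMS states.

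Finally, the tracial states of $A$ and the $(\si,\be)$-KMS states of $\T(A,\al)$ are each weak*-compact convex sets (the latter is a weak*-closed subset of the state space, which is compact since $\T(A,\al)$ is unital). The map $\psi \mapsto \tau_\psi$ is weak*-continuous, being in each coordinate $a$ the affine evaluation $\psi \mapsto (1 - e^{-\be}d)^{-1}\psi(a p)$; since it is moreover an affine bijection between compact Hausdorff spaces, it is an affine homeomorphism, and its inverse $\tau \mapsto \psi_\tau$ is the asserted affine weak*-homeomorphism. (That the KMS states form a simplex is then automatic, since the tracial states of a unital C*-algebra form a Choquet simplex and this is preserved by affine homeomorphisms.) The only genuinely non-formal ingredients are the positivity identity $a^*a - \sum_i V_i\al_i(a^*a)V_i^* = a^* p\, a$ and the bookkeeping in the iteration and word-reindexing; I expect the positivity of $\tau_\psi$ to be the step that most needs care, the rest being essentially soft.
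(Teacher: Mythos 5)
Your proof is correct, and it reaches the surjectivity statement by a genuinely different route than the paper. Both arguments start from the same two ingredients (the construction of $\psi_\tau$ in Proposition \ref{P: constr KMS} and the rigidity statement in Proposition \ref{P: cond KMS}(ii)) and both extract the candidate trace from the central positive element $P = 1 - \sum_i V_iV_i^*$; since $P$ is a projection commuting with $A$, your $\tau_\psi(a) = (1-e^{-\be}d)^{-1}\psi(aP)$ coincides with the paper's $\vpi_P(a) = \vpi(PaP)/\vpi(P)$. The divergence is in how one proves $\psi = \psi_{\tau_\psi}$: the paper works at the operator level, introducing the increasing projections $p_m = \sum_{k\le m}\sum_{|w|=k}V_wPV_w^*$, computing $\vpi(p_m) = 1-(e^{-\be}d)^{m+1} \to 1$, and invoking the external lemma from \cite{LRR11} that $\vpi(p_m f p_m)\to\vpi(f)$; you instead stay entirely at the level of functionals on $A$, iterating the identity $\psi|_A = (1-e^{-\be}d)\tau_\psi + e^{-\be}R(\psi|_A)$ and killing the remainder by the norm bound $\|R^N(\psi|_A)\| = d^N$, which is where $\be>\log d$ enters. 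Your version is more self-contained (no appeal to \cite{LRR11}) and makes the role of the critical inverse temperature transparent, at the cost of leaning somewhat harder on Proposition \ref{P: cond KMS}(ii) to upgrade equality on $A$ to equality on $\T(A,\al)$; the paper's version generalizes more readily to situations where one cannot first reduce everything to $A$. Your treatment of the topological statement is also cleaner than the paper's citation of ``standard arguments'': observing that the $(\si,\be)$-KMS states form a weak*-compact set (by Proposition \ref{P: cond KMS}(ii) they are cut out by weak*-closed conditions inside the compact state space) and that the continuous affine bijection $\psi\mapsto\tau_\psi$ between compact Hausdorff convex sets is automatically a homeomorphism is a complete argument. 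The only point worth spelling out a little more is the traciality of $\tau_\psi$: your one-line justification is fine because $\psi(\al_i(ab)) = \psi(\al_i(a)\al_i(b)) = \psi(\al_i(b)\al_i(a)) = \psi(\al_i(ba))$, but it deserves the explicit display.
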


\begin{proof}
The fact that $\tau \mapsto \psi_\tau$ is an affine weak*-continuous mapping follows by the standard arguments of \cite[Proof of Theorem 6.1]{LRRW13}.

First we show that the mapping is onto.
To this end, given a $(\si,\be)$-KMS state $\vpi$ of $\T(A,\al)$ we will construct a tracial state $\tau$ of $A$ such that $\vpi = \psi_\tau$.
By Proposition \ref{P: cond KMS} it suffices to show that $\vpi(a) = \psi_\tau(a)$ for all $a\in A$.
In what follows the key is to isolate a projection in $\T(A,\al)$ that commutes with $A$.
To this end we will use the projection
\[
P := I - \sum_{i=1}^d V_i V_i^* \in \T(A,\al).
\]
Indeed recall that
\[
a V_i V_i^* = V_i \al_i(a) V_i^* = V_i V_i^* a, 
\]
hence $Pa = aP$ for all $a \in A$.
Moreover
\begin{align*}
\vpi(P)
& =
1 - \sum_{i=1}^d \vpi(V_iV_i^*)
 =
1 - \sum_{i=1}^d e^{-\be} \vpi(V_i^*V_i) = 1 - e^{-\be} d.
\end{align*}
Notice here that the quantity $\vpi(P)$ is constant for all $(\si,\be)$-KMS states $\vpi$.
We aim to show that the function
\[
\vpi_P \colon A \to \bC: a \mapsto \frac{\vpi(PaP)}{\vpi(P)}
\]
gives the appropriate tracial state on $A$.
Since $\si_{i\be}(P) = P$ and $\si_{i\be}(a) = a$ we have that
\[
\vpi(PabP) = \vpi(b P P a) = \vpi(PbaP).
\]
Thus $\vpi_P$ is indeed a tracial state on $A$.
Let the elements
\[
p_m := \sum_{k=0}^m \sum_{|w| = k} V_w P V_w^* \in \T(A,\al).
\]
By definition we have that $(1 - \sum_{i=1}^d V_i V_i^*) V_w = 0$, hence $P V_u^* V_w P =0$ when $w \neq u$.
Therefore each $p_m$ is a projection.
Furthermore we obtain
\begin{align*}
\vpi(p_m)
& =
\sum_{k=0}^m \sum_{|w| = k} \vpi(V_w P V_w^*) \\
& =
\sum_{k=0}^m \sum_{|w|=k} e^{-|w|\be} \vpi(P) \\
& =
(1 - e^{-\be} d) \sum_{k=0}^m \sum_{|w|=k} e^{-|w|\be} \\
& =
(1 - e^{-\be} d) \sum_{k=0}^m (e^{-\be} d)^k \\
& =
1 - (e^{-\be} d)^{m+1}.
\end{align*}
Thus $\lim_m \vpi(p_m) =1$ and by \cite[Lemma 7.3]{LRR11} we get that $\lim_m \vpi(p_m f p_m) = \vpi(f)$ for all $f \in \T(A,\al)$. In particular for $a \in A$ we get that
\begin{align*}
\vpi(a)
& =
\lim_m \vpi(p_m a p_m) \\
& =
\lim_m \sum_{k=0}^m \sum_{l=0}^m \sum_{|w|=k} \sum_{|u|=k} \vpi(V_w P V_w^* a V_u P V_u^*) \\
& =
\lim_m \sum_{k=0}^m \sum_{l=0}^m \sum_{|w|=k} \sum_{|u|=k} e^{-|w|\be} \vpi(PV_w^* a V_u P V_u^* V_w P) \\
& =
\lim_m \sum_{k=0}^m \sum_{|w|=k} e^{-|w|\be} \vpi(P V_w^* a V_w P) \\
& =
\lim_m \sum_{k=0}^m \sum_{|w|=k} e^{-|w|\be} \vpi(P \al_{\ol{w}}(a) P) \\
& =
\sum_{k=0}^\infty \sum_{|w|=k} e^{-|w|\be} \vpi_P\al_{\ol{w}}(a),
\end{align*}
which shows that $\vpi = \psi_\tau$ for $\tau = \vpi_P$.

Finally we show that the mapping is one-to-one. 
To this end it suffices to show that if $\psi_\tau$ is the state associated with a tracial state $\tau$ of $A$ as in Proposition \ref{P: constr KMS} then $(\psi_\tau)_P(a) = \tau(a)$.
Indeed if this is true then $\psi_{\tau} = \psi_{\tau'}$ will imply that $\tau = \tau'$.
Since $\psi_\tau(P) = 1 - e^{-\be} d$ for any $\tau$, it suffices to show that
\[
\psi_\tau(PaP) = (1 - e^{-\be} d) \tau(a), \foral a \in A.
\]
A direct computation shows that
\begin{align*}
\psi_\tau(PaP)
& =
\psi_\tau(a) - \sum_{i=1}^d \big( \psi_\tau(V_iV_i^*a) + \psi_\tau(aV_iV_i^*) \big) + \sum_{i,j=1}^d \psi_\tau(V_iV_i^* a V_jV_j^*) \\
& =
\psi_\tau(a) - \sum_{i=1}^d \big( e^{-\be} \psi_\tau(V_i^*aV_i) - e^{-\be}\psi_\tau(V_i^*aV_i) + e^{-\be} \psi_\tau(V_i^* a V_i) \big)\\
& =
\psi_\tau(a) - \sum_{i=1}^d e^{-\be} \psi_\tau \al_i(a),
\end{align*}
where we have used that
\[
\psi_\tau(V_i \cdot V_i^*a) = e^{-\be} \psi_\tau(V_i^*a V_i), \,
\psi_\tau(aV_i \cdot V_i^*) = e^{-\be} \psi_\tau(V_i^*a V_i),
\]
and that 
\[
\psi_\tau(V_i \cdot V_i^* a V_jV_j^*) = \de_{i,j} e^{-\be} \psi_\tau(V_i^*a V_i).
\]
By the definition of $\psi_\tau$ we then obtain
\begin{align*}
\psi_\tau(PaP)
& =
(1 - e^{-\be} d) \sum_{m=0}^\infty \Big( \sum_{|w|=m} \tau\al_{\ol{w}}(a) - \sum_{i=1}^d \tau\al_{\ol{w}}\al_i(a) \Big) \\
& =
(1 - e^{-\be} d) \sum_{m=0}^\infty \Big( \sum_{|w|=m} \tau\al_{\ol{w}}(a) - \sum_{i=1}^d \tau\al_{\ol{iw}}(a) \Big)\\
& =
(1 - e^{-\be} d) \sum_{m=0}^\infty \Big( \sum_{|w|=m} \tau\al_{\ol{w}}(a) - \sum_{|w| = m+1} \tau\al_{\ol{w}}(a) \Big)\\
& =
(1 - e^{-\be} d) \tau(a),
\end{align*}
and the proof is complete.
\end{proof}


We continue with the examination of the ground states.

\begin{proposition}\label{P: ground TP}
Let $(A,\al)$ be a unital C*-dynamical system of multiplicity $d$.
Then the mapping $\tau \mapsto \psi_\tau$ with
\[
\psi_\tau(V_\mu a V_\nu^*) =
\begin{cases}
\tau(a) &\text{ for } \mu= \emptyset = \nu,\\
0 &\text{ otherwise},
\end{cases}
\]
is an affine weak*-homeomorphism from the state space $S(A)$ onto the space of the ground states on $\T(A,\al)$.
\end{proposition}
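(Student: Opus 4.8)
The plan is to realise each $\psi_\tau$ as a vacuum vector state, so that it is automatically a well-defined state on $\T(A,\al)$, to read off the stated formula and check it is a ground state, to verify that $\tau\mapsto\psi_\tau$ is affine, weak*-continuous and injective, and finally to prove surjectivity by a growth estimate on the upper half-plane; a compactness argument then upgrades the continuous bijection to a homeomorphism. Concretely, given a state $\tau$ of $A$ with GNS triple $(H_\tau,\pi_\tau,\xi_\tau)$, form the Fock representation $V_\tau\times\wt{\pi_\tau}$ of $\T(A,\al)$ on $H_\tau\otimes\ltwo(\bF_+^d)$ and put
\[
\psi_\tau(f)=\sca{(V_\tau\times\wt{\pi_\tau})(f)\,\xi_\tau\otimes e_\emptyset,\, \xi_\tau\otimes e_\emptyset},\qquad f\in\T(A,\al),
\]
a state on $\T(A,\al)$. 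Since $(V_\tau)_\mu^*(\xi_\tau\otimes e_\emptyset)=0$ for $\mu\neq\emptyset$, while the $e_\emptyset$-entry of $\wt{\pi_\tau}(a)$ is $\pi_\tau(a)$ (as $\al_\emptyset=\id$), one computes $\psi_\tau(V_\mu aV_\nu^*)=\sca{\wt{\pi_\tau}(a)(V_\tau)_\nu^*(\xi_\tau\otimes e_\emptyset),\,(V_\tau)_\mu^*(\xi_\tau\otimes e_\emptyset)}=\de_{\mu,\emptyset}\de_{\nu,\emptyset}\tau(a)$. Thus $\psi_\tau$ has the values claimed on the spanning monomials and $\psi_\tau|_A=\tau$.

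To see $\psi_\tau$ is a ground state I would take, as the dense analytic $*$-subalgebra, the linear span of the monomials $V_\mu aV_\nu^*$, each of which extends to the entire function $z\mapsto e^{i(|\mu|-|\nu|)z}V_\mu aV_\nu^*$; recall that a state is a ground state precisely when $z\mapsto\psi(f\si_z(g))$ is bounded on $\{\im z>0\}$ for $f,g$ in such a subalgebra (see \cite[Section 5.3]{BraRob97}, and \cite{LacRae10,LRRW13}). For $f=V_\mu aV_\nu^*$ and $g=V_\ka bV_\la^*$ we have $\psi_\tau(f\si_z(g))=e^{i(|\ka|-|\la|)z}\psi_\tau(fg)$, and a multiplication computation as in the proof of Proposition \ref{P: cond KMS} shows that $fg$ is $0$ or has the form $V_\mu cV_{\la\nu'}^*$ (when $\nu=\ka\nu'$) or $V_{\mu\ka'}cV_\la^*$ (when $\ka=\nu\ka'$) for some $c\in A$; for $\psi_\tau(fg)$ to be nonzero the outer words must be trivial, which in either case forces $|\ka|-|\la|=|\nu|\geq0$. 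Hence $|e^{i(|\ka|-|\la|)z}|\leq1$ on the upper half-plane and the function is bounded there, and by linearity $\psi_\tau$ is a ground state.

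Affineness and weak*-continuity of $\tau\mapsto\psi_\tau$ are read off from the formula, using density of the span of monomials and the uniform boundedness of states, exactly as in \cite[Proof of Theorem 6.1]{LRRW13}; injectivity is immediate since $\psi_\tau|_A=\tau$. For surjectivity let $\psi$ be a ground state of $\T(A,\al)$ and set $\tau:=\psi|_A\in S(A)$. Testing the ground state bound against $g=V_i^*$, for which $\si_z(V_i^*)=e^{-iz}V_i^*$, forces $z\mapsto e^{-iz}\psi(fV_i^*)$ to be bounded on $\{\im z>0\}$; since $|e^{-iz}|=e^{\im z}\to\infty$, this gives $\psi(fV_i^*)=0$ for every $f$ in the monomial span. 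Taking $f=V_\mu aV_\nu^*$ yields $\psi(V_\mu aV_{i\nu}^*)=0$, so on letting $i$ and $\nu$ vary and passing to adjoints we get $\psi(V_\mu aV_\nu^*)=0$ whenever $\mu\neq\emptyset$ or $\nu\neq\emptyset$, and $\psi(a)=\tau(a)$ otherwise. Comparing with the first paragraph, $\psi=\psi_\tau$.

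Since $S(A)$ is weak*-compact and the space of ground states of $\T(A,\al)$ is weak*-Hausdorff, the continuous bijection $\tau\mapsto\psi_\tau$ is automatically a homeomorphism onto it, which finishes the proof. The one substantive step is surjectivity, and its crux is the observation that $e^{-iz}$ grows without bound on $\{\im z>0\}$, so that testing the ground state condition against the $V_i^*$ kills every monomial with a nontrivial left or right word; the remaining steps are routine, relying only on the relations $aV_i=V_i\al_i(a)$ and the multiplication of monomials already recorded in the proof of Proposition \ref{P: cond KMS}.
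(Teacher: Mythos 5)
Your proof is correct and follows essentially the same route as the paper: the vacuum vector state on the Fock representation realizes $\psi_\tau$, the unboundedness of $e^{-iz}$ on the upper half-plane kills every monomial with a nontrivial outer word to give surjectivity, and the fact that the surviving products $fg$ have $|\ka|-|\la|=|\nu|\ge 0$ gives the ground state bound. The only cosmetic difference is that the paper uses a Cauchy--Schwarz estimate where you directly inspect which products of monomials survive, and it tests $\si_z(aV_\nu^*)$ where you test $\si_z(V_i^*)$; both variants are equally valid.
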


\begin{proof}
First we show that if $\psi$ is a ground state then it is of the aforementioned form.
Suppose that $\nu \neq \emptyset$; by assumption the map
\[
r + i t \mapsto \psi(V_\mu \si_{r + it}(a V_\nu^*)) = e^{-i|\nu|r} e^{|\nu|t} \psi(V_\mu a V_\nu^*),
\]
must be bounded when $t> 0$, for all $a \in A$.
Therefore $\psi(V_{\mu} a V_{\nu}^*) =0$ in this case.
When $\nu = \emptyset$ but $\mu \neq \emptyset$, then the function
\[
r + it \mapsto \psi(a^* \si_{r + i t}(V_\mu^*)) = e^{-i|\mu|r} e^{|\mu|t} \psi(a^* V_\mu^*)
\]
must be bounded for $t > 0$. This implies that $\psi(a^* V_\mu^*) = 0$.
Taking adjoints yields $\psi(V_\mu a) =0$.

Conversely let $\psi$ be a state on $\T(A,\al)$ that satisfies the condition of the statement. Then for $f = V_\mu a V_\nu^*$ and $g = V_\ka b V_\la^*$ we compute
\begin{align*}
|\psi(f \si_{r + it}(g))|^2
& =
|e^{i(|\ka| - |\la|)(r + it)} \psi(fg)|^2 \\
& =
e^{-(|\ka| - |\la|)2t} |\psi(fg)|^2 \\
& \leq
e^{-(|\ka| - |\la|)2t} \psi(f^*f) \psi(g^*g) \\
& \leq
e^{-(|\ka| - |\la|)2t} \psi(V_\nu a^* a V_\nu^*) \psi(V_\la b^*bV_\la^*).
\end{align*}
When $\nu \neq \emptyset$ or $\la \neq \emptyset$ then the above expression is $0$.
When $\nu = \la = \emptyset$ then
\begin{align*}
|\psi(f \si_{r + it}(g))|
& =
e^{-|\ka| t} |\psi(V_\mu a V_\ka b)|
 =
e^{-|\ka|t} |\psi(V_{\mu\ka} \al_{\ol{\ka}}(a) b)|,
\end{align*}
which is zero when $\mu \neq \emptyset$ or $\la \neq \emptyset$.
Finally when $\mu = \nu = \ka = \la = \emptyset$ then $\psi(f \si_{r + it}(g)) = \psi(ab)$, which is bounded for all $a,b \in A$.

To end the proof it suffices to show that every state on $A$ gives rise to a ground state on $\T(A,\al)$.
Fix $\tau \in S(A)$ and let $(H_\tau, \pi_\tau, \xi_\tau)$ be the associated GNS representation.
Then for the Fock representation $(\wt{\pi_\tau}, V_{\tau})$ we define the state
\[
\psi(f) = \sca{f \xi_\tau \otimes e_{\mt}, \xi_\tau \otimes e_{\mt}}, \foral f \in \T(A,\al).
\]
It is readily verified that $\psi(a) = \tau(a)$ for all $a\in A$.
For $f = V_\mu a V_\nu^*$ we compute
\begin{align*}
\psi(V_\mu a V_\nu^*)
& =
\sca{(V_\tau)_\mu \, \wt{\pi_\tau}(a) \, (V_\tau)_\nu^* \, \xi_\tau \otimes e_{\mt}, \xi_\tau \otimes e_{\mt}} \\
& =
\sca{\wt{\pi_\tau}(a) \, (V_\tau)_\nu^* \, \xi_\tau \otimes e_{\mt}, (V_\tau)_\mu^* \,  \xi_\tau \otimes e_{\mt}} \\
& =
\de_{\mu, \emptyset} \de_{\nu, \emptyset} \sca{\pi_\tau(a) \xi_\tau, \xi_\tau} \\
& =
\begin{cases}
\tau(a) &\text{ when } \mu = \emptyset = \nu,\\
0 &\text{ otherwise},
\end{cases} \\
& =
\begin{cases}
\psi(a) &\text{ when } \mu = \emptyset = \nu,\\
0 &\text{ otherwise},
\end{cases}
\end{align*}
and the proof is complete.
\end{proof}

We continue with the analysis of the KMS${}_\infty$ states on $\T(A,\al)$.

\begin{proposition}\label{P: infty TP}
Let $(A,\al)$ be a unital C*-dynamical system of multiplicity $d$.
Then the mapping $\tau \mapsto \psi_\tau$ with
\[
\psi_\tau(V_\mu a V_\nu^*)
=
\begin{cases}
\tau(a) &\text{ for } \mu = \emptyset = \nu,\\
0 &\text{ otherwise},
\end{cases}
\]
is an affine weak*-homeomorphism from the tracial state space $T(A)$ onto the space of the KMS${}_\infty$ states on $\T(A,\al)$.
\end{proposition}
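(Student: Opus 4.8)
The map $\tau \mapsto \psi_\tau$ is well defined: the formula in the statement is the same as in Proposition \ref{P: ground TP}, so $\psi_\tau$ is the ground state on $\T(A,\al)$ associated with $\tau$, and affine weak*-continuity, as well as injectivity, follow exactly as in the proof of Theorem \ref{T: KMS TP} — the latter because $\psi_\tau(a) = \tau(a)$ for all $a \in A$. It remains to identify the image of $T(A)$ under this map with the set of KMS$_\infty$ states of $\T(A,\al)$; the reason only tracial states appear here, in contrast with the ground states of Proposition \ref{P: ground TP}, is that the only $(\si,\be)$-KMS states at hand for $\be > \log d$ are those of Theorem \ref{T: KMS TP}, which require $\tau$ to be tracial.

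For the inclusion of the image into the KMS$_\infty$ states, fix a tracial state $\tau$ and, for $\be > \log d$, write $\psi_\tau^{(\be)}$ for the associated $(\si,\be)$-KMS state provided by Theorem \ref{T: KMS TP}. I would evaluate $\lim_{\be \to \infty} \psi_\tau^{(\be)}(V_\mu a V_\nu^*)$ on the dense linear span of the monomials. When $\mu \neq \nu$ this value is $0$; when $\mu = \nu \neq \mt$ it equals $e^{-|\mu|\be}\psi_\tau^{(\be)}(a)$, which tends to $0$ since $|\mu| \geq 1$ and $|\psi_\tau^{(\be)}(a)| \leq \|a\|$; and when $\mu = \nu = \mt$ the scalar $1 - e^{-\be}d$ tends to $1$, the $m = 0$ summand is $\tau(a)$, and the remaining tail is bounded in modulus by $\|a\| \sum_{m \geq 1}(e^{-\be}d)^m = \|a\|\, e^{-\be}d/(1 - e^{-\be}d) \to 0$. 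Hence $\psi_\tau^{(\be)}(V_\mu a V_\nu^*) \to \psi_\tau(V_\mu a V_\nu^*)$; since each $\psi_\tau^{(\be)}$ is a state, a uniform boundedness argument upgrades this to weak*-convergence on all of $\T(A,\al)$, so $\psi_\tau$ is a KMS$_\infty$ state.

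Conversely — and this is the main step — let $\psi$ be a KMS$_\infty$ state, so $\psi = \lim_\lambda \psi_\lambda$ weak* for a net of $(\si,\be_\lambda)$-KMS states with $\be_\lambda \to \infty$; discarding the indices with $\be_\lambda \leq \log d$, Theorem \ref{T: KMS TP} supplies tracial states $\tau_\lambda$ with $\psi_\lambda = \psi_{\tau_\lambda}^{(\be_\lambda)}$. Since $A$ is unital, $T(A)$ is weak*-compact, so after passing to a subnet we may assume $\tau_\lambda \to \tau$ for some $\tau \in T(A)$. Evaluating on the monomials and running the estimates of the previous paragraph along this subnet — now also using $\tau_\lambda(a) \to \tau(a)$ in the $m = 0$ term — yields $\psi(V_\mu a V_\nu^*) = \de_{\mu,\mt}\de_{\nu,\mt}\tau(a) = \psi_\tau(V_\mu a V_\nu^*)$, so $\psi = \psi_\tau$. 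The one delicate point is the interchange of the two limits, in $\be_\lambda$ and in $\tau_\lambda$, which is controlled by the geometric tail estimate together with $\|\psi_{\tau_\lambda}^{(\be_\lambda)}\| = 1$, after passing to the subnet. Finally, a continuous bijection from the weak*-compact space $T(A)$ onto the (Hausdorff) space of KMS$_\infty$ states is automatically a homeomorphism, which completes the proof.
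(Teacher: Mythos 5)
Your proposal is correct and follows essentially the same route as the paper: identify the candidate form via the limit of the $(\si,\be)$-KMS formulas and verify that each $\psi_{\tau,\be}$ converges weak* to $\psi_\tau$ using the geometric tail estimate. The only cosmetic difference is in the converse direction, where the paper avoids your compactness-of-$T(A)$ and limit-interchange step by observing directly from Proposition \ref{P: cond KMS} that $\psi_\be(V_\mu a V_\nu^*)=\de_{\mu,\nu}e^{-|\mu|\be}\psi_\be(a)\to 0$ for $\mu\neq\mt$ and that $\psi|_A$ is tracial as a weak* limit of tracial states, so one may simply take $\tau=\psi|_A$.
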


\begin{proof}
First we show that if $\psi$ is a KMS${}_\infty$ state then it is of the aforementioned form.
Let $\psi_{\be}$ be $(\si,\be)$-KMS states on $\T(A,\al)$ for $\be > \log d$ that converge in the w*-topology to $\psi$.
By Proposition \ref{P: cond KMS} we obtain that every $\psi_\be|_A$ is a tracial state on $A$ and that
\[
\psi_{\be}(V_\mu a  V_\nu^*) = \de_{\mu, \nu} e^{-|\mu|\be} \psi_\be(a)
\]
which tends to zero when $\be \longrightarrow \infty$.
Consequently $\psi$ is as in the statement.

Conversely, let $\psi_\tau$ be as in the statement with respect to a tracial state $\tau$ of $A$.
Let $\psi_{\tau,\be}$ be as defined in Proposition \ref{P: constr KMS} for $\be > \log d$, i.e.
\begin{align*}
\psi_{\tau, \be}(V_{\mu} a V_{\nu}^*)
=
\de_{\mu, \nu} \cdot (1 - e^{-\be} d) \cdot
\sum_{m=0}^\infty e^{-(m + |\mu|)\be} \sum_{|w| = m} \tau\al_{\ol{w}}(a).
\end{align*}
By the w*-compactness we may choose a sequence of such states that converges to a state, say $\psi$.
By definition $\psi$ is then a KMS$_\infty$ state, and we aim to show that $\psi_\tau = \psi$. When $\mu \neq \emptyset$ or $\nu \neq \emptyset$ then we get that
\[
\psi(V_\mu a V_\nu^*) = \lim_{\be \rightarrow \infty} \psi_{\tau,\be}(V_\mu a V_\nu^*) = 0 = \psi_\tau(V_\mu a V_\nu^*),
\]
as in the preceding paragraph.
When $\mu = \nu = \emptyset$ we obtain
\begin{align*}
\psi_{\tau,\be}(V_\mu a V_\nu^*)
& =
(1 - e^{-\be} d) \cdot
\sum_{m=0}^\infty e^{-m\be} \sum_{|w| = m} \tau\al_{\ol{w}}(a) \\
& =
(1- e^{-\be} d) \cdot
\Big( \tau(a) + \sum_{m=1}^\infty e^{-m\be} \sum_{|w| = m} \tau\al_{\ol{w}}(a) \Big).
\end{align*}
However we have that
\begin{align*}
| \sum_{m=1}^\infty e^{-m\be} \sum_{|w| = m} \tau\al_{\ol{w}}(a) |
& \leq
\nor{a} \cdot \sum_{m=1}^\infty e^{-m\be} d^m \\
& =
\nor{a}(-1 + (1- e^{-\be} d)^{-1}).
\end{align*}
Taking $\be \longrightarrow \infty$ yields that the quantity $\sum_{m=1}^\infty e^{-m\be} \sum_{|w| = m} \tau\al_{\ol{w}}(a)$ tends to zero.
Since $\lim_{\be \rightarrow \infty} (1- e^{-\be} d) = 1$, we then obtain that $\lim_{\be \rightarrow \infty} \psi_{\tau,\be}(a) = \tau(a)$.
Therefore we get
\[
\psi(V_\mu a V_\nu^*) = \psi(a) = \lim_{\be \rightarrow \infty} \psi_{\tau,\be}(a) = \tau(a) = \psi_\tau(a) = \psi_\tau(V_\mu a V_\nu^*)
\]
which completes the proof that $\psi = \psi_\tau$ on $\T(A,\al)$.
The last part follows in the same way as in the proof of Proposition \ref{P: ground TP}.
\end{proof}

\subsection{The Cuntz-Pimsner algebra}

The gauge action on $\O(A,\al)$ induces an action of $\bR$ on $\O(A,\al)$ as in the case of $\T(A,\al)$.
We will denote it by the same symbol $\si$.
(The reason being that) the gauge actions on $\T(A,\al)$ and $\O(A,\al)$ intertwine the quotient map $q \colon \T(A,\al) \to \O(A,\al)$ on the ideal generated by
\[
a \cdot (I - \sum_{i=1}^d V_iV_i^*), \foral a \in J_{(A,\al)}.
\]
Recall here that $J_{(A,\al)} = (\bigcap_{i=1}^d \ker \al_i )^\perp$.
Therefore the $(\si,\be)$-KMS states on $\O(A,\al)$ define $(\si,\be)$-KMS states on $\T(A,\al)$.
In Theorem \ref{T: KMS CP} we show that there is actually a bijection.
As we observed in the introduction we cannot apply directly the analysis of Laca and Neshveyev \cite{LacNes04}.

\begin{theorem}\label{T: KMS CP}
Let $(A,\al)$ be a unital C*-dynamical system of multiplicity $d$ and $\be > \log d$.
Then there is an affine weak*-homeomorphism $\tau \mapsto \vpi_\tau$ from the simplex of the tracial states on $A$ that vanish on $J_{(A,\al)} = (\bigcap_{i=1}^d \ker \al_i )^\perp$ onto the simplex of the $(\si,\be)$-KMS states on $\O(A,\al)$ such that
\begin{align*}
\vpi_\tau(S_\mu a S_\nu^*)
=
\de_{\mu, \nu} \cdot (1 - e^{-\be}d) \cdot
\sum_{m=0}^\infty e^{-(m + |\mu|)\be} \sum_{|w| = m} \tau\al_{\ol{w}}(a),
\end{align*}
for all $a\in A$ and $\mu, \nu \in \bF_+^d$.
\end{theorem}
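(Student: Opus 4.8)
The plan is to transfer the parametrization from $\T(A,\al)$ (Theorem \ref{T: KMS TP}) across the quotient map $q\colon \T(A,\al) \to \O(A,\al)$, using the fact that $q$ intertwines the two $\bR$-actions $\si$. Since $\O(A,\al)$ is a quotient of $\T(A,\al)$, every $(\si,\be)$-KMS state $\vpi$ on $\O(A,\al)$ pulls back to a $(\si,\be)$-KMS state $\vpi \circ q$ on $\T(A,\al)$; by Theorem \ref{T: KMS TP} there is a unique tracial state $\tau$ of $A$ with $\vpi \circ q = \psi_\tau$, and then $\vpi(S_\mu a S_\nu^*) = \psi_\tau(V_\mu a V_\nu^*)$ has the stated form. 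So the only real content is: (a) determine exactly which $\tau$ arise this way, and (b) show that each such $\tau$ does give a genuine state on the quotient, i.e. that $\psi_\tau$ annihilates the defining ideal of $\O(A,\al)$.

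For (a), recall that $\vpi \circ q$ factors through $q$ precisely when it kills the generators $a(I - \sum_i V_i V_i^*) = aP$ for $a \in J_{(A,\al)}$. First I would observe that for $a \geq 0$ in $J_{(A,\al)}$ we have $\psi_\tau(a P a) = 0$ (using $\psi_\tau(aPa) = (1-e^{-\be}d)\,\tau(a)$, the identity established at the end of the proof of Theorem \ref{T: KMS TP} — note $PaP = PaP$ and $\psi_\tau(a P a)$ equals this by the trace property and $P^2=P$, $Pa=aP$), so that $\psi_\tau$ vanishes on $aP$, i.e. $\psi_\tau$ descends, if and only if $\tau(a) = 0$ for all $a \in J_{(A,\al)}$, i.e. $\tau$ vanishes on $J_{(A,\al)}$. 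Conversely a positive functional kills an ideal of a C*-algebra iff it kills its positive part, so $\tau|_{J_{(A,\al)}} = 0$ suffices. This pins down the domain as exactly the tracial states on $A$ vanishing on $J_{(A,\al)}$.

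For (b), given such a $\tau$, I would define $\vpi_\tau$ on $\O(A,\al)$ by $\vpi_\tau(q(f)) := \psi_\tau(f)$; this is well-defined and positive precisely by the vanishing just established, and it is a $(\si,\be)$-KMS state because $q$ intertwines the actions and the KMS condition passes to quotients (it is a norm-dense family of algebraic identities, preserved under the $*$-homomorphism $q$ together with $\psi_\tau$). The asserted formula for $\vpi_\tau(S_\mu a S_\nu^*)$ is then immediate from $S_\mu = q(V_\mu)$ and the formula in Theorem \ref{T: KMS TP}. That the correspondence $\tau \mapsto \vpi_\tau$ is affine and a weak*-homeomorphism onto the $(\si,\be)$-KMS states follows from the corresponding statement in Theorem \ref{T: KMS TP} restricted to the closed face of tracial states vanishing on $J_{(A,\al)}$: it is injective because $\tau \mapsto \psi_\tau$ is, it is surjective by the pullback argument of the first paragraph, and affinity/continuity in both directions are inherited since $q$ is a fixed surjection (the inverse sends $\vpi$ to $(\vpi\circ q)|_A = (\vpi|_A)$, which is plainly weak*-continuous).

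The main obstacle I anticipate is keeping straight the two-way implication in step (a): one must be careful that the KMS state $\psi_\tau$ vanishes on the \emph{whole} ideal generated by $\{aP : a \in J_{(A,\al)}\}$, not merely on those generators. Here I would use that $\psi_\tau$ is tracial on $A$ and that the ideal is spanned by elements $V_\mu (aP) V_\nu^* b = V_\mu a P V_\nu^* b$; combined with $PV_\nu^*V_\mu P = \de_{\mu,\nu} P$ and $\psi_\tau(V_\mu c V_\nu^*) = \de_{\mu,\nu} e^{-|\mu|\be}\psi_\tau(c)$, every such term reduces to a multiple of $\psi_\tau$ applied to something involving $aP$ with $a \in J_{(A,\al)}$, which vanishes by the generator computation. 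The rest is bookkeeping with the formulas already in hand.
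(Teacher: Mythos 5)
Your proposal is correct and takes essentially the same route as the paper: both transfer the parametrization of Theorem \ref{T: KMS TP} across the quotient $q$, using the projection $P = I - \sum_{i=1}^d V_iV_i^*$ and the identity $\psi_\tau(PaP) = (1-e^{-\be}d)\,\tau(a)$ to show that $\psi_\tau$ descends to $\O(A,\al)$ exactly when $\tau$ vanishes on $J_{(A,\al)}$. The only difference is that you spell out why killing the generators $aP$ forces $\psi_\tau$ to kill all of $\ker q$ (a point the paper leaves implicit), and your minor slip there --- $\psi_\tau(aPa) = (1-e^{-\be}d)\,\tau(a^2)$ rather than $\tau(a)$ --- is harmless since $\tau(a^2)=0$ for all positive $a \in J_{(A,\al)}$ is equivalent to $\tau|_{J_{(A,\al)}}=0$.
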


\begin{proof}
Let $\tau$ be a tracial state on $A$ and let $\psi_\tau$ be a state of $\T(A,\al)$ as in Proposition \ref{P: constr KMS}.
As in Theorem \ref{T: KMS TP} let the projection $P = I - \sum_{i=1}^d V_iV_i^*$ and recall that $a P =P a$ for all $a \in A$.
By the last computation of Theorem \ref{T: KMS TP} we have that
\begin{align*}
\psi_\tau(a P) = \psi_\tau(P a P) = (1 - e^{-\be} d) \tau(a)
\end{align*}
for all $a \in A$.
As a consequence, if $\tau$ vanishes on $J_{(A,\al)}$ then $\psi_\tau$ vanishes on $\ker q$ and a $(\si,\be)$-KMS state on $\O(A,\al)$ is defined by $\vpi_\tau q = \psi_\tau$.
Conversely if $\vpi_\tau$ is as in the statement then let $\psi_\tau = \vpi_\tau q$ and the above equation shows that $\tau$ vanishes on $J_{(A,\al)}$.
\end{proof}

\begin{proposition}\label{P: inj CP}
Let $(A,\al)$ be an injective unital C*-dynamical system of multiplicity $d$.
Then $\O(A,\al)$ does not attain $(\si,\be)$-KMS states for $\be \neq \log d$.
\end{proposition}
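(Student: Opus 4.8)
The plan is to derive the statement directly from results already in hand, splitting according to the sign of $\be - \log d$: the two ranges $\be < \log d$ and $\be > \log d$ together exhaust all $\be \neq \log d$. Note also that since $d \geq 1$ we always have $\log d \geq 0$, so any $\be > \log d$ automatically satisfies $\be > 0$, which is the hypothesis needed by the earlier characterizations; in particular when $d = 1$ we have $\log d = 0$ and the two ranges are $\be < 0$ and $\be > 0$.

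First I would dispose of the case $\be < \log d$. By item (i) of Proposition \ref{P: cond KMS}, the Toeplitz--Pimsner algebra $\T(A,\al)$ has \emph{no} $(\si,\be)$-KMS states at all, for any unital system of multiplicity $d$. On the other hand, as recorded in the paragraph preceding Theorem \ref{T: KMS CP}, the gauge actions on $\T(A,\al)$ and on $\O(A,\al)$ are intertwined by the quotient map $q$, so a $(\si,\be)$-KMS state on $\O(A,\al)$ pulls back along $q$ to a $(\si,\be)$-KMS state on $\T(A,\al)$. Hence if $\O(A,\al)$ had a $(\si,\be)$-KMS state $\vpi$, then $\vpi \circ q$ would be one on $\T(A,\al)$, contradicting Proposition \ref{P: cond KMS}(i). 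So there are no $(\si,\be)$-KMS states on $\O(A,\al)$ when $\be < \log d$.

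Next I would treat the case $\be > \log d$ via Theorem \ref{T: KMS CP}. That theorem exhibits an affine weak*-homeomorphism from the simplex of tracial states on $A$ vanishing on the ideal $J_{(A,\al)} = (\bigcap_{i=1}^d \ker \al_i)^\perp$ onto the simplex of $(\si,\be)$-KMS states on $\O(A,\al)$. But $(A,\al)$ being injective means $\bigcap_{i=1}^d \ker \al_i = (0)$, and as noted in Subsection \ref{Ss: dyn sys} this is equivalent to $J_{(A,\al)} = A$. A tracial state $\tau$ of $A$ vanishing on $J_{(A,\al)} = A$ would satisfy $\tau(1) = 0$, which contradicts $\tau(1) = 1$. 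Thus the parametrizing simplex is empty, and $\O(A,\al)$ has no $(\si,\be)$-KMS states for $\be > \log d$ either.

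There is no real obstacle here: the proposition is a short corollary of Proposition \ref{P: cond KMS} and Theorem \ref{T: KMS CP}, the two ingredients being (a) the pullback of KMS states along the quotient $q$ and (b) the identity $J_{(A,\al)} = A$ for injective systems together with the normalization $\tau(1) = 1$. The only thing to be careful about is the bookkeeping of temperature ranges, namely checking that the excluded value is precisely $\log d$ and that the endpoint issue at $\be = 0$ when $d = 1$ is already absorbed (the side $\be < 0 = \log d$ falls under Proposition \ref{P: cond KMS}(i), the side $\be > 0 = \log d$ under Theorem \ref{T: KMS CP}).
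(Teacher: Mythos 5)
Your proof is correct, but it takes a different route from the paper. The paper's argument is a single direct computation: for an injective system $J_{(A,\al)}=A$, so the row isometry becomes a row unitary in the quotient, $\sum_{i=1}^d S_iS_i^* = 1$; applying the KMS condition to this identity as in Proposition \ref{P: cond KMS}(i) now gives the \emph{equality} $1 = \psi\bigl(\sum_i S_iS_i^*\bigr) = e^{-\be}\sum_i\psi(S_i^*S_i) = e^{-\be}d$ rather than an inequality, which forces $\be=\log d$ in one stroke, uniformly over all $\be$. You instead split the range $\be\neq\log d$ into two halves and dispose of each by citing earlier results: the half $\be<\log d$ via pullback along $q$ and Proposition \ref{P: cond KMS}(i) (this is sound, and is really the same inequality computation transported to $\T(A,\al)$), and the half $\be>\log d$ via the surjectivity of the parametrization in Theorem \ref{T: KMS CP} together with the observation that no state vanishes on $J_{(A,\al)}=A$. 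Both arguments are valid and non-circular (Theorem \ref{T: KMS CP} precedes and does not depend on this proposition). The paper's proof is shorter and self-contained, needing only the algebraic relation $\sum_i S_iS_i^*=1$; yours buys nothing extra here but is a legitimate corollary-style deduction, at the cost of invoking the full classification theorem where a one-line computation suffices.
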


\begin{proof}
If $(A,\al)$ is injective then $J_{(A,\al)} = A$, hence $\sum_{i=1}^d S_i S_i^* = 1$ in $\O(A,\al)$.
Proceed as in the proof of Proposition \ref{P: cond KMS} to obtain an estimation for $\be$ by using the $S_i$ in the place of $V_i$.
However now we obtain equality which shows that $\be = \log d$.
\end{proof}


We conclude with the analogues for the ground states and the KMS${}_\infty$ states on $\O(A,\al)$.
The proofs are left to the reader.
We remark that in the case of the KMS${}_\infty$ states one has to make use of Theorem \ref{T: KMS CP}.

\begin{corollary}\label{C: ground/infty CP}
Let $(A,\al)$ be a unital C*-dynamical system.
Then the mapping $\tau \mapsto \vpi_\tau$ with
\[
\vpi_\tau(S_\mu a S_\nu^*) =
\begin{cases}
\tau(a) & \text{ for } \mu=\mt=\nu,\\
0 & \text{ otherwise},
\end{cases}
\]
is an affine weak*-homeomorphism from the simplex of the states $S(A)$ (resp. the tracial states $T(A)$) that vanish on $J_{(A,\al)} = (\bigcap_{i=1}^d \ker \al_i )^\perp$ onto the simplex of the ground states (resp. KMS${}_\infty$ states) on $\O(A,\al)$.
\end{corollary}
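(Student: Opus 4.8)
The plan is to transport Propositions \ref{P: ground TP} and \ref{P: infty TP} across the quotient map $q \colon \T(A,\al) \to \O(A,\al)$, exactly as Theorem \ref{T: KMS CP} transports Theorem \ref{T: KMS TP}. Since the gauge actions on $\T(A,\al)$ and $\O(A,\al)$ intertwine $q$, and the linear span of the monomials $V_\mu a V_\nu^*$ is a $\si$-invariant dense analytic subalgebra, precomposition with $q$ is an affine weak*-homeomorphism from the state space of $\O(A,\al)$ onto the weak*-closed set of states of $\T(A,\al)$ that annihilate $\ker q$, and it carries the ground states (resp.\ the KMS$_\infty$ states) of $\O(A,\al)$ bijectively onto the ground states (resp.\ the KMS$_\infty$ states) of $\T(A,\al)$ that annihilate $\ker q$. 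Combining this with Propositions \ref{P: ground TP} and \ref{P: infty TP}, it then suffices to show that, for a state (resp.\ a tracial state) $\tau$ of $A$, the state $\psi_\tau$ of $\T(A,\al)$ with $\psi_\tau(V_\mu a V_\nu^*) = \de_{\mu, \mt}\de_{\nu, \mt}\,\tau(a)$ annihilates $\ker q$ if and only if $\tau$ vanishes on $J_{(A,\al)}$; the formula of the statement for $\vpi_\tau$ is then forced by $\vpi_\tau q = \psi_\tau$ and $q(V_\mu a V_\nu^*) = S_\mu a S_\nu^*$.

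For this equivalence I would use the description
\[
\ker q = \ol{\spn}\big\{\, V_\mu a \big( I - \textstyle\sum_{i=1}^d V_iV_i^* \big) V_\nu^* \ \big|\ a \in J_{(A,\al)},\ \mu, \nu \in \bF_+^d \,\big\}.
\]
Writing $P = I - \sum_{i=1}^d V_iV_i^*$ as in the proof of Theorem \ref{T: KMS TP}, this follows from the fact that $q$ is the quotient by the ideal generated by $\{\,aP \mid a \in J_{(A,\al)}\,\}$, together with the identities $Pa = aP$ for $a \in A$, $PV_i = 0$ and $V_i^*P = 0$, and the fact that $J_{(A,\al)}$ is an ideal of $A$. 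Given this, for $\psi_\tau$ as above one computes, for all $a \in J_{(A,\al)}$ and $\mu, \nu \in \bF_+^d$,
\[
\psi_\tau\big( V_\mu a P V_\nu^* \big) = \psi_\tau( V_\mu a V_\nu^*) - \sum_{i=1}^d \psi_\tau\big( V_{\mu i}\al_i(a) V_{\nu i}^* \big) = \de_{\mu, \mt}\de_{\nu, \mt}\,\tau(a),
\]
since each word $\mu i$ is nonempty. This vanishes for all such $a$, $\mu$, $\nu$ precisely when $\tau|_{J_{(A,\al)}} = 0$ (take $\mu = \nu = \mt$), which is the desired equivalence; in particular, when $\tau$ vanishes on $J_{(A,\al)}$ then $\psi_\tau$ annihilates $\ker q$ and descends to a ground state (resp.\ a KMS$_\infty$ state) $\vpi_\tau$ of $\O(A,\al)$ of the stated form.

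It remains to collect the formal properties. The map $\tau \mapsto \vpi_\tau$ is affine and weak*-continuous by the same argument as in Propositions \ref{P: ground TP} and \ref{P: infty TP}, it is injective because $\vpi_\tau|_A = \tau$, and it is onto by the first paragraph; since its domain, namely the states (resp.\ the tracial states) of $A$ vanishing on $J_{(A,\al)}$, is weak*-compact and the weak*-topology is Hausdorff, this continuous bijection is a weak*-homeomorphism, with inverse $\vpi \mapsto \vpi|_A$. The main point requiring care is the implication $\tau|_{J_{(A,\al)}} = 0 \Rightarrow \psi_\tau$ annihilates $\ker q$, i.e.\ precisely the place where the description of $\ker q$ above is invoked; for the KMS$_\infty$ assertion one may alternatively proceed as the paper indicates, realizing $\vpi_\tau$ as the weak*-limit as $\be \to \infty$ of the $(\si,\be)$-KMS states of $\O(A,\al)$ supplied by Theorem \ref{T: KMS CP} and estimating $\sum_{m \geq 1} e^{-m\be}\sum_{|w| = m}\tau\al_{\ol{w}}(a) \to 0$ exactly as in Proposition \ref{P: infty TP}.
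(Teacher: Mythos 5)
Your proof is correct and follows exactly the route the paper intends: the paper leaves this corollary to the reader with the hint that the KMS${}_\infty$ case requires Theorem \ref{T: KMS CP}, and you transport Propositions \ref{P: ground TP} and \ref{P: infty TP} through the quotient $q$ precisely as Theorem \ref{T: KMS CP} transports Theorem \ref{T: KMS TP}, invoking Theorem \ref{T: KMS CP} where needed to produce the approximating $(\si,\be)$-KMS states on $\O(A,\al)$. Your explicit description of $\ker q$ as $\ol{\spn}\{V_\mu a (I-\sum_i V_iV_i^*) V_\nu^*\mid a\in J_{(A,\al)}\}$ and the resulting computation are the right way to make rigorous the equivalence between $\tau$ vanishing on $J_{(A,\al)}$ and $\psi_\tau$ annihilating $\ker q$.
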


\subsection{KMS states at $\be = \log d$}

We continue with the examination of the $(\si,\log d)$-KMS states.
First we show the connection between $(\si,\log d)$-KMS states on $\T(A,\al)$ with $(\si,\log d)$-KMS states on $\O(A,\al)$.

\begin{proposition}\label{P: boundary}
Let $(A,\al)$ be a unital C*-dynamical system of multiplicity $d$.
Then $\psi$ is a $(\si,\log d)$-KMS state on $\T(A,\al)$ if and only if it factors through a $(\si,\log d)$-KMS state on $\O(A,\al)$.
\end{proposition}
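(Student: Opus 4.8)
The reverse implication is immediate and was essentially observed before Theorem \ref{T: KMS CP}: the quotient map $q\colon\T(A,\al)\to\O(A,\al)$ intertwines the gauge actions, hence the $\si$-actions, so a $(\si,\log d)$-KMS state on $\O(A,\al)$ pulls back to a $(\si,\log d)$-KMS state on $\T(A,\al)$. The content is the forward implication: every $(\si,\log d)$-KMS state $\psi$ on $\T(A,\al)$ vanishes on $\ker q$, the closed two-sided ideal generated by the elements $a\big(I-\sum_{i=1}^d V_iV_i^*\big)$ with $a\in J_{(A,\al)}$. The plan is to prove the stronger statement that the projection $P:=I-\sum_{i=1}^d V_iV_i^*\in\T(A,\al)$ lies in the kernel of the GNS representation $\pi_\psi$ of $\psi$; since $\pi_\psi(aP)=\pi_\psi(a)\pi_\psi(P)=0$ for all $a\in A$ and $\ker\pi_\psi$ is a closed ideal, this gives $\ker q\subseteq\ker\pi_\psi$, hence $\psi|_{\ker q}=0$, and the induced state on $\O(A,\al)=\T(A,\al)/\ker q$ is $(\si,\log d)$-KMS because $q$ intertwines the gauge actions. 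Everything below is uniform in $d\ge 1$; in the degenerate case $d=1$, where $\be=\log d=0$ and Proposition \ref{P: cond KMS}(ii) does not apply, the KMS condition is simply traciality and all the computations remain valid verbatim.

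First I would record two elementary facts. The identity $aV_iV_i^*=V_i\al_i(a)V_i^*=V_iV_i^*a$ shows that $P$ commutes with $A$, and since $[V_1,\dots,V_d]$ is a row isometry, $V_i^*P=V_i^*-\sum_jV_i^*V_jV_j^*=V_i^*-V_i^*=0$, whence also $PV_i=0$. Applying the KMS condition to $f=V_i$ and $g=V_i^*$ gives $\psi(V_iV_i^*)=\psi(V_i^*\si_{i\log d}(V_i))=e^{-\log d}\psi(V_i^*V_i)=d^{-1}$, so $\psi(P)=1-d\cdot d^{-1}=0$. From $\psi(P)=0$ and the operator inequality $0\le PxP\le\nor{x}P$, valid whenever $x\ge 0$, I obtain the key estimate
\[
\psi(PxP)=0\qquad\text{for every }x\in\T(A,\al),
\]
extending from positive $x$ to arbitrary $x$ by writing $x$ as a linear combination of positive elements.

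Next I would show $\psi(f^*Pf)=0$ for every $f\in\T(A,\al)$. The map $f\mapsto\psi(f^*Pf)$ is continuous, and by the Cauchy--Schwarz inequality for the positive semidefinite sesquilinear form $(f,g)\mapsto\psi(f^*Pg)$ it suffices to verify the vanishing on a set whose linear span is dense, so I may take $f=V_\mu aV_\nu^*$. If $|\mu|\ge 1$ then $V_\mu^*P=0$ forces $f^*Pf=V_\nu a^*V_\mu^*\,P\,V_\mu aV_\nu^*=0$. If $\mu=\emptyset$, so $f=aV_\nu^*$, then, using that $a$ commutes with $P$, $f^*Pf=V_\nu(a^*a)PV_\nu^*$, and applying the KMS condition to $V_\nu(a^*a)P$ (gauge-homogeneous of degree $|\nu|$, hence analytic) and $V_\nu^*$ yields
\[
\psi\big(V_\nu(a^*a)PV_\nu^*\big)=e^{-|\nu|\log d}\,\psi\big(V_\nu^*V_\nu(a^*a)P\big)=d^{-|\nu|}\,\psi\big((a^*a)P\big)=d^{-|\nu|}\,\psi\big(P(a^*a)P\big)=0
\]
by the key estimate. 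Feeding this into the GNS space, $\nor{\pi_\psi(P)\pi_\psi(f)\xi_\psi}^2=\psi(f^*Pf)=0$ for all $f$, and density of $\pi_\psi(\T(A,\al))\xi_\psi$ in $\H_\psi$ gives $\pi_\psi(P)=0$, which completes the argument as outlined in the first paragraph.

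The step I expect to be the main obstacle is the reduction just carried out: a state that annihilates a generating set of an ideal need not annihilate the ideal, so one cannot simply check that $\psi$ kills the generators $aP$. Routing everything through $\pi_\psi(P)=0$ is what makes it work, and this in turn relies on the two structural facts that $P$ has $\psi$-measure zero and that $P$ is absorbed on the left by every $V_\mu^*$ with $|\mu|\ge 1$ (the identity $V_i^*P=0$), so that $f^*Pf$ can be nonzero only when $f$ is supported in gauge degree $0$ --- and there the key estimate, together with $\si_t|_A=\id_A$, finishes the job.
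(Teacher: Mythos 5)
Your proof is correct, and it starts from the same two observations as the paper: the KMS condition at $\be=\log d$ forces $\psi(V_iV_i^*)=d^{-1}$, hence $\psi(P)=0$ for the projection $P=I-\sum_{i=1}^d V_iV_i^*$, and Cauchy--Schwarz then kills $\psi$ on multiples of $P$. Where you genuinely diverge is in the last step. The paper stops at $\psi(aP)=0$ for $a\in J_{(A,\al)}$ and simply asserts that $\psi$ therefore vanishes on $\ker q$; as you correctly point out, annihilating a generating set of an ideal does not by itself give annihilation of the ideal (one must handle elements of the form $xaPy$), so the paper's proof leaves a step implicit. Your route --- proving $\psi(f^*Pf)=0$ for all $f$ via the gauge-homogeneity of $V_\nu(a^*a)P$ and the absorption $V_i^*P=0$, and concluding $\pi_\psi(P)=0$ in the GNS representation --- closes that gap cleanly and uniformly in $d\ge 1$, including the degenerate tracial case $\be=0$. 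An alternative, closer in spirit to the paper, is one more application of the KMS condition: for analytic $x$ one has $\psi(xPy)=\psi(Py\,\si_{i\be}(x))$, which vanishes because $P$ lies in the left kernel of $\psi$ together with its adjoint; either way the extra argument is needed, and yours supplies it. All the individual computations you use ($Pa=aP$, $V_i^*P=0$, $0\le PxP\le\nor{x}P$, the polarization/Cauchy--Schwarz reduction to the spanning set $V_\mu aV_\nu^*$) check out.
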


\begin{proof}
If $\psi$ is a $(\si,\log d)$-KMS state on $\T(A,\al)$ then
\[
1 = \psi(1) = \psi(V_i^* V_i) = e^{\log d} \psi(V_i V_i^*) = d \psi(V_i V_i^*),
\]
for all $i=1, \dots, d$.
As in Theorem \ref{T: KMS TP} let the projection $P = I - \sum_{i=1}^d V_iV_i^*$ which is an element of $\T(A,\al)$.
By the Cauchy-Schwartz inequality we then obtain
\begin{align*}
|\psi( a P )|^2
& \leq
\psi(aa^*) \cdot \psi (P)
\end{align*}
for all $a\in A$, since $P$ is a projection in $\T(A,\al)$. However we have that
\[
\psi(P) = \psi(1 - \sum_{i=1}^d V_iV_i^*) = 1 - \sum_{i=1}^d \psi(V_i V_i^*) = 0
\]
which shows that $\psi( a (1 - \sum_{i=1}^d V_iV_i^*)) = 0$ for all $a\in A$ and in particular for all $a \in J_{(A,\al)}$.
Hence $\psi$ vanishes on $\ker q$ and thus defines a state $\vpi$ on $\O(A,\al)$.
The converse follows by the fact that the actions of $\bR$ on the Pimsner algebras intertwine $q$, and the proof is complete.
\end{proof}

One advantage of the approach of Laca and Raeburn \cite{LacRae10} is that the analysis of the KMS states of $\T(A,\al)$ implies existence of $(\si,\log d)$-KMS states on $\O(A,\al)$.
In short, begin with a sequence of $\{\psi_{n}\}$ such that every $\psi_n$ is a $(\si,\be_n)$-KMS state of $\T(A,\al)$ with $\be_n \downarrow \log d$.
By passing to a subsequence and re-labeling we may assume that the $\psi_n$ converge to a state $\psi$ in the weak*-topology.
Then $\psi$ is a $(\si,\log d)$-KMS state for $\T(A,\al)$ \cite[Proposition 5.3.23]{BraRob97} and Proposition \ref{P: boundary} gives the required state on $\O(A,\al)$.

This scheme has been applied in several cases to give details about the form of $(\si,\log d)$-KMS states.
For example we mention \cite[Theorem 7.3]{LRRW13}, where a concrete context is available.

However this route seems difficult to be pursued at the generality we aim here.
Our analysis is derived at the level where we make a distinction between injective and non-injective C*-dynamical systems.
Below we present an alternative pathway for producing directly $(\si,\log d)$-KMS states.
For doing so we use the tail adding technique from \cite{KakKat11} exhibited in Subsection \ref{Ss: dyn sys}.
For the next proposition let $(C,\ga)$ be the injective dilation of $(A,\al)$ with the same multiplicity $d$.
Also let $(C^{(1)},\ga^{(1)})$ be the unitization of $(C,\ga)$.

\begin{proposition}
Let $(A,\al)$ be a unital C*-dynamical system and let $(C,\ga)$ be its injective dilation as in \cite{KakKat11}.
Then a $(\si,\log d)$-KMS state on $\O(C,\ga)$ or $\O(C^{(1)},\ga^{(1)})$ defines a $(\si,\log d)$-KMS state on $\O(A,\al)$ by restriction.
\end{proposition}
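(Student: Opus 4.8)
The plan is to realize $\O(A,\al)$ as a subalgebra of the Pimsner algebra of the injective dilation in a way that is compatible with the gauge action, and then pull back a KMS state by restriction. First I would recall from Subsection \ref{Ss: dyn sys} that the injective dilation $(C,\ga)$ is constructed so that $\O(A,\al)$ sits inside $\O(C,\ga)$ as a full corner (this is the Muhly--Tomforde-type mechanism, adapted in \cite{KakKat11}); more precisely there is a projection $p \in \O(C,\ga)$, namely the image of $1_A \in C_0 = A \subseteq C$, such that $p\,\O(C,\ga)\,p \cong \O(A,\al)$ via a $*$-isomorphism that sends $S_\mu^{(A)} a S_\nu^{*(A)}$ to $S_\mu^{(C)}\, \iota(a)\, S_\nu^{*(C)}$, where $\iota \colon A \hookrightarrow C$ is the embedding onto $C_0$. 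The key point to check — and it is essentially immediate from the formula $\si_t(S_\mu c S_\nu^*) = e^{i(|\mu|-|\nu|)t} S_\mu c S_\nu^*$ — is that this corner is invariant under $\si$ and that the restriction of $\si$ on $\O(C,\ga)$ to $p\,\O(C,\ga)\,p$ is intertwined with $\si$ on $\O(A,\al)$ under the isomorphism. The same remarks apply verbatim with $(C^{(1)},\ga^{(1)})$ in place of $(C,\ga)$, since $\O(C,\ga)$ embeds in $\O(C^{(1)},\ga^{(1)})$ equivariantly for the gauge actions (as noted in the Preliminaries).

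Granting this, the proof is short. Let $\Phi$ be a $(\si,\log d)$-KMS state on $\O(C,\ga)$ (or on $\O(C^{(1)},\ga^{(1)})$). Since $p = q(1_A)$ is fixed by $\si$, the entire extension of $\si$ restricts to the hereditary subalgebra $p\,\O(C,\ga)\,p$, and one has the standard fact that the KMS condition passes to $\si$-invariant hereditary subalgebras: for $a,b$ analytic in the corner, $\Phi(ab) = \Phi\big(b\,\si_{i\log d}(a)\big)$ still holds because $a,b \in p\,\O(C,\ga)\,p$ are in particular analytic in the ambient algebra. The only subtlety is normalization: $\Phi$ restricted to the corner need not be a state, because $\Phi(p)$ may be strictly between $0$ and $1$. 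So I would first argue $\Phi(p) \neq 0$. Here I would invoke Proposition \ref{P: boundary} (applied to the injective dilation): since $(C,\ga)$ is injective, actually one should be careful — but in any case, if $\Phi(p) = 0$ then by Cauchy--Schwarz $\Phi$ vanishes on $p\,\O(C,\ga)$ and on $\O(C,\ga)\,p$, hence on the ideal generated by $p$; one then checks this ideal is all of $\O(C,\ga)$ (the dilation is constructed so that $p$ is a full projection), contradicting $\Phi(1) = 1$. Therefore $\Phi(p) > 0$, and $\vpi := \Phi(p)^{-1}\,\Phi(p \cdot p)$, transported through the isomorphism $p\,\O(C,\ga)\,p \cong \O(A,\al)$, is a state on $\O(A,\al)$ satisfying the $(\si,\log d)$-KMS condition.

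The main obstacle I anticipate is purely bookkeeping rather than conceptual: one must verify carefully that the embedding $\O(A,\al) \cong p\,\O(C,\ga)\,p$ from \cite{KakKat11} is genuinely $\si$-equivariant, i.e. that it respects word length $|\mu| - |\nu|$ on monomials. Because the dilation $\ga_i$ fixes the ``degree one'' structure — each generator $V_i^{(C)}$ maps to a generator and the tail coordinates are shuffled without changing which $\ga_i$ acts — the gauge grading on $\O(C,\ga)$ restricts correctly, so this is a routine check against the explicit formulas for $\ga_i$ given earlier; I would state it as a lemma or simply cite the equivariance of the tail-adding construction. A secondary point is confirming that $p$ is full in $\O(C,\ga)$, equivalently that no nonzero gauge-invariant ideal avoids it; this too follows from the construction (the orbit of $C_0$ under the $\ga_i$ generates everything, compare the diagram in Subsection \ref{Ss: dyn sys}), but it is worth spelling out since it is exactly what rules out the degenerate case $\Phi(p)=0$.
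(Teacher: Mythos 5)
Your argument follows the same route as the paper's (very terse) proof: realize $\O(A,\al)$ as the full corner $p\,\O(C,\ga)\,p$ with $p=1_A$, note that $p$ is fixed by the gauge action so the corner is $\si$-invariant and the KMS condition survives restriction, and normalize by $\Phi(p)^{-1}$. You are in fact more careful than the paper, which only records that the inclusions are canonical and gauge-compatible and says nothing about the normalization or about why $\Phi(p)\neq 0$; your fullness argument for $\Phi(p)>0$ on $\O(C,\ga)$ is correct and worth spelling out. The one place your ``verbatim'' transfer to $(C^{(1)},\ga^{(1)})$ is too quick is precisely that fullness step: $p$ is full in $\O(C,\ga)$ but not in $\O(C^{(1)},\ga^{(1)})$ (the unit of $C^{(1)}$ does not lie in the ideal generated by $C$, whose quotient is a nonzero gauge-invariant quotient of the Cuntz--Toeplitz algebra), so for the unitized algebra your argument does not rule out $\Phi(p)=0$; the paper's own proof is equally silent on this point.
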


\begin{proof}
By construction, we have that $A \subseteq C$ and that $p \ga_i(a) p  = \al_i(a)$ for all $a\in A$ and $p=1 \in A$.
In addition $\O(A,\al)$ is the full corner of $\O(C,\ga)$ by the projection $p$.
Moreover $\O(C,\ga)$ (and consequently $\O(A,\al)$) is a C*-subalgebra of $\O(C^{(1)},\ga^{(1)})$.
All these inclusions are canonical in the sense that if $(\pi,\{S_i\}_{i=1}^d)$ defines a faithful representation of $\O(C^{(1)},\ga^{(1)})$, then $(\pi|_C,\{S_i\}_{i=1}^d)$ defines a faithful representation of $\O(C,\ga)$ and $(\pi|_A,\{S_ip\}_{i=1}^d)$ defines a faithful representation of $\O(A,\al)$.
As a consequence the gauge action, and thus the action $\si$, is compatible with the inclusions.
\end{proof}

Therefore we may produce $(\si,\log d)$-KMS states from the injective dilations.
We will thus restrict now our attention to injective systems $(A,\al)$.

We examine separately the cases $d = 1$ and $d >1$.
The main reason is because when $d=1$ then the KMS states at critical temperature amount to tracial states.
Motivated by \cite{MWM98} we obtain the following result for $d>1$.

\begin{proposition}\label{P: affine log d}
Let $(A,\al)$ be an injective unital C*-dynamical system of multiplicity $d>1$.
We write $\tau \in AVT(A,\al)$ for the $\tau \in T(A)$ such that
\[
\tau(a) = \frac{1}{d} \sum_{i=1}^d \tau \al_i(a), \foral a\in A,
\]
and we write $\vpi \in AVT(\O(A,\al)^\ga)$ for the $\vpi \in T(\O(A,\al)^\ga)$ such that
\[
\vpi(f) = \frac{1}{d} \sum_{i=1}^d \vpi(S_i^* f S_i), \foral f \in \O(A,\al)^\ga.
\]
Then there is an affine weak*-homeomorphism between the three simplices of $AVT(A,\al)$, of $AVT(\O(A,\al)^\ga)$, and that of the $(\si,\log d)$-KMS states on $\O(A,\al)$.
\end{proposition}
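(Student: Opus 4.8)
The plan is to realise the claimed homeomorphism as a composition $AVT(A,\al)\to AVT(\O(A,\al)^\ga)\to\{(\si,\log d)\text{-KMS states on }\O(A,\al)\}$, where the first arrow is restriction to $A$ and the second is restriction to $\O(A,\al)^\ga$. Throughout I would write $E\colon\O(A,\al)\to\O(A,\al)^\ga$ for the faithful conditional expectation obtained by averaging the gauge action, and recall that, since $(A,\al)$ is injective and unital, $[S_1,\dots,S_d]$ is a row unitary, so $\O(A,\al)^\ga=\ol{\spn}\{S_\mu a S_\nu^*\mid|\mu|=|\nu|\}$ is the inductive limit of the subalgebras $B_n=\ol{\spn}\{S_\mu a S_\nu^*\mid|\mu|=|\nu|=n\}$, each $*$-isomorphic to $M_{d^n}(A)$ via $S_\mu a S_\nu^*\mapsto e_{\mu\nu}\otimes a$, with connecting maps the unital embeddings $S_\mu a S_\nu^*\mapsto\sum_{i=1}^d S_{\mu i}\al_i(a)S_{\nu i}^*$ coming from $\sum_i S_iS_i^*=1$.

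\textit{KMS states and the core.} Since $d>1$ we have $\log d>0$, so the argument of Proposition~\ref{P: cond KMS}(ii) applies verbatim at $\be=\log d$ (as observed in the remark following Proposition~\ref{P: cond KMS}): a $(\si,\log d)$-KMS state $\psi$ on $\O(A,\al)$ restricts to a tracial state on $A$ and satisfies $\psi(S_\mu a S_\nu^*)=\de_{\mu,\nu}d^{-|\mu|}\psi(a)$; in particular $\psi$ vanishes off $\O(A,\al)^\ga$, so $\psi=\psi\circ E$. As $\si$ is trivial on $\O(A,\al)^\ga$, the restriction $\vpi:=\psi|_{\O(A,\al)^\ga}$ is tracial, and applying the KMS relation to the pair $(S_i^*,\,fS_i)$ with $f\in\O(A,\al)^\ga$ gives $\psi(S_i^*fS_i)=d\,\psi(fS_iS_i^*)$; summing over $i$ and using $\sum_iS_iS_i^*=1$ yields $\vpi(f)=\tfrac1d\sum_i\vpi(S_i^*fS_i)$, so $\vpi\in AVT(\O(A,\al)^\ga)$, and $\psi\mapsto\vpi$ is affine and weak*-continuous. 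Conversely, for $\vpi\in AVT(\O(A,\al)^\ga)$ put $\tau:=\vpi|_A$; traciality gives $\vpi(S_\mu a S_\nu^*)=\de_{\mu,\nu}\vpi(S_\mu a S_\mu^*)$ when $|\mu|=|\nu|$ (write $S_\mu a S_\nu^*=(S_\mu a S_\mu^*)(S_\mu S_\nu^*)$ and cycle), while the averaging property applied to $f=S_\mu a S_\mu^*$ gives $\vpi(S_\mu a S_\mu^*)=\tfrac1d\vpi(S_{\mu'}aS_{\mu'}^*)$ with $\mu'$ obtained from $\mu$ by deleting one letter; iterating, $\vpi(S_\mu a S_\nu^*)=\de_{\mu,\nu}d^{-|\mu|}\tau(a)$. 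Hence $\psi:=\vpi\circ E$ has $\psi|_A=\tau$ and $\psi(S_\mu a S_\nu^*)=\de_{\mu,\nu}e^{-|\mu|\log d}\psi(a)$, so $\psi\circ q$ (with $q\colon\T(A,\al)\to\O(A,\al)$ the quotient map) is a state on $\T(A,\al)$ of the form appearing in Proposition~\ref{P: cond KMS}(ii), hence (again by that argument at $\be=\log d$) a $(\si,\log d)$-KMS state on $\T(A,\al)$; by Proposition~\ref{P: boundary} it factors through the $(\si,\log d)$-KMS state $\psi$ on $\O(A,\al)$. Since $E$ fixes $\O(A,\al)^\ga$ and $\psi=\psi\circ E$ for KMS $\psi$, the maps $\psi\mapsto\psi|_{\O(A,\al)^\ga}$ and $\vpi\mapsto\vpi\circ E$ are mutually inverse affine weak*-continuous bijections, hence a homeomorphism of these two simplices.

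\textit{The core and $AVT(A,\al)$.} For $\vpi\in AVT(\O(A,\al)^\ga)$, evaluating $\vpi$ on $a=\sum_iS_i\al_i(a)S_i^*$ and using $\vpi(S_icS_i^*)=\tfrac1d\tau(c)$ from the previous step gives $\tau=\tfrac1d\sum_i\tau\al_i$, so $\tau=\vpi|_A\in AVT(A,\al)$; moreover $\vpi$ is recovered from $\tau$ by $\vpi(S_\mu a S_\nu^*)=\de_{\mu,\nu}d^{-|\mu|}\tau(a)$, so restriction to $A$ is injective on $AVT(\O(A,\al)^\ga)$. For surjectivity, given $\tau\in AVT(A,\al)$ I would define on each $B_n\cong M_{d^n}(A)$ the tracial state $S_\mu a S_\nu^*\mapsto\de_{\mu,\nu}d^{-n}\tau(a)$; the fixed-point identity $\tau=\tfrac1d\sum_i\tau\al_i$ is exactly what makes these compatible with the connecting maps, so they assemble to a tracial state $\vpi$ on $\O(A,\al)^\ga$, and a direct check on the monomials $S_\mu a S_\nu^*$ shows $\vpi$ has the averaging property and restricts to $\tau$. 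Both directions are affine, and weak*-continuity of $\tau\mapsto\vpi$ follows since $\vpi$ is pinned down on the dense $*$-algebra $\bigcup_nB_n$ by formulas continuous in $\tau$ while all states have norm one; being a continuous affine bijection of weak*-compact convex sets, it is an affine homeomorphism. Composing the two homeomorphisms yields the statement.

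\textit{Expected obstacle.} The only non-formal ingredient is the structural input that, in the injective unital case, $\O(A,\al)^\ga$ is the inductive limit $\varinjlim M_{d^n}(A)$ along the stated unital embeddings; once this is available, matching the two averaging conditions with the fixed-point condition defining $AVT(A,\al)$ is pure bookkeeping with the monomials $S_\mu a S_\nu^*$, and producing \emph{genuine} $(\si,\log d)$-KMS states (as opposed to the candidate formula) is reduced to Proposition~\ref{P: cond KMS} and Proposition~\ref{P: boundary}.
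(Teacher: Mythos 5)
Your proposal is correct and follows essentially the same route as the paper: the identification $A_n=B_n$ making $\O(A,\al)^\ga$ the inductive limit of the $B_n$, the explicit formula $\vpi(S_\mu a S_\nu^*)=\de_{\mu,\nu}d^{-|\mu|}\tau(a)$ whose compatibility with the connecting maps is exactly the averaging condition, the passage to $\O(A,\al)$ via $\psi=\vpi\circ E$, and the appeal to Propositions \ref{P: cond KMS} and \ref{P: boundary} to certify the KMS property. The only cosmetic differences are the order in which you compose the two bijections and your derivation of the averaging identity directly from the KMS relation applied to $(S_i^*,fS_i)$, which the paper instead reads off the explicit formula.
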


\begin{proof}
Recall that the fixed point algebra is $\O(A,\al)^\ga = \ol{\cup_n A_n}$ where $A_n = \spn\{S_\mu a S_\nu^* \mid |\mu|, |\nu| \leq n\}$.
On the other hand let the C*-subalgebras
\[
B_n = \spn\{ S_\mu a S_\nu^* \mid |\mu| = |\nu| = n\}
\]
of $\O(A,\al)^\ga$ so that $A_n = \cup_{m=0}^n B_m$.
Since the system is injective and unital we may use that $1 = \sum_{i=1}^d S_iS_i^*$ to obtain
\[
B_{m} \ni S_\mu a S_\nu^* = S_\mu a \cdot 1 \cdot S_\nu^* = \sum_{i=1}^d S_{\mu i} \al_i(a) S_{\nu i}^* \in B_{m+1}.
\]
Consequently $A_n = B_n$ for all $n \in \bZ_+$ and $\O(A,\al)^\ga$ is the inductive limit of the $B_n$ for $n \in \bZ_+$ via the connecting $*$-homomorphisms
\[
B_{n} \ni S_\mu a S_\nu^* \mapsto \sum_{i=1}^d S_{\mu i} \al_i(a) S_{\nu i}^* \in B_{n+1}.
\]
In what follows we will use this form of $\O(A,\al)^\ga$.

For $\tau \in AVT(A,\al)$ we define the extension $\vpi_n$ on $B_n$ by
\[
\vpi_n(S_\mu a S_\nu^*) := \frac{1}{d^n} \sum_{|w| = |\mu|} \tau(S_w^* S_\mu a S_\nu^* S_w) = \frac{1}{d^n} \de_{\mu,\nu} \tau(a).
\]
Every $\vpi_n$ is positive being the average on the diagonal of $B_n$.
The family $\{\vpi_n\}_n$ is compatible with the directed sequence on the $B_n$, because
\begin{align*}
\vpi_{n+1}\big(\sum_{i=1}^d S_{\mu i} \al_i(a) S_{\nu i}^*\big)
& =
\sum_{i=1}^d \frac{1}{d^{n+1}} \de_{\mu i, \nu i} \tau \al_i(a) \\
& =
\de_{\mu,\nu} \frac{1}{d^n} \sum_{i=1}^d \frac{1}{d} \tau \al_i(a)
=
\vpi_n(S_\mu a S_\nu^*).
\end{align*}
Therefore $\{\vpi_n\}_n$ defines a positive functional $\vpi_\tau \colon \O(A,\al)^\ga \to \bC$.
The unit $1 \in A$ is mapped to $f_{n} := \sum_{|\nu| = n} S_\nu S_\nu^*$ via the inclusion $A \hookrightarrow B_n$.
The computation
\begin{align*}
\vpi_n(f_n)
=
\frac{1}{d^n} \sum_{|w| = |\mu|} \sum_{|v| = n} \tau(S_w^* S_v S_v^* S_w)
=
\frac{1}{d^n} \sum_{|w| = |v| =n} \de_{w,v} \tau(1)
=
1
\end{align*}
shows that each $\vpi_n$ is a state and consequently $\vpi$ is a state.
In addition, for the elements $S_\mu a S_\nu^*, S_\ka b S_\la^* \in B_n$ we have
\[
\vpi_n(S_\mu a S_\nu^* \cdot S_\ka b S_\la^*)
=
\de_{\nu, \ka} \vpi_n(S_\mu a b S_\la^*)
=
\de_{\nu,\ka} \de_{\mu,\la} \frac{1}{d^n} \tau(a b)
\]
which gives a symmetrical formula (since $\tau$ is tracial), and implies that every $\vpi_n$ is tracial.
Thus $\vpi_\tau$ is a tracial state on $\O(A,\al)^\ga$.
Now let $f = S_\mu a S_\nu^* \in \O(A,\al)^\ga$ and compute
\begin{align*}
\frac{1}{d} \sum_{i=1}^d \vpi_\tau(S_i^* f S_i)
& =
\frac{1}{d} \sum_{i=1}^d \vpi_\tau(S_i^* S_\mu a S_\nu^*  S_i) \\
& =
\frac{1}{d} \sum_{i=1}^d \de_{\mu, i \mu'} \de_{\nu, i\nu'} \vpi_\tau(S_{\mu'} a S_{\nu'}^*) \\
& =
\frac{1}{d} \sum_{i=1}^d \de_{\mu, i \mu'} \de_{\nu, i\nu'} \de_{\mu,\nu} \frac{1}{d^{|\mu| - 1}} \tau(a) \\
& =
\frac{1}{d^{|\mu|}} \tau(a) = \vpi_\tau(f)
\end{align*}
which shows that $\vpi_\tau \in AVT(\O(A,\al)^\ga)$.
Due to the form of $\vpi_\tau$ we obtain that the mapping $\tau \mapsto \vpi_\tau$ is one-to-one.
In order to show injectivity fix $\vpi \in AVT(\O(A,\al)^\ga)$ and compute
\begin{align*}
\vpi(a)
=
\frac{1}{d} \sum_{i=1}^d \vpi(S_i^* a S_i)
=
\frac{1}{d} \sum_{i=1}^d \vpi(S_i^*S_i\al_i(a))
=
\frac{1}{d} \sum_{i=1}^d \vpi \al_i(a),
\end{align*}
hence $\tau = \vpi|_A \in AVT(A,\al)$.

For the second part fix $\vpi_\tau \in AVT(\O(A,\al)^\ga)$ for some $\tau \in AVT(A,\al)$ and define the state $\psi_\tau = \vpi_\tau E \colon \O(A,\al) \to \bC$ where $E$ is the conditional expectation onto the fixed point algebra.
Then $\psi|_A = \tau$ is tracial and
\[
\psi_\tau(S_\mu a S_\nu^*) = \de_{|\mu|,|\nu|} \vpi_\tau(S_\mu a S_\nu^*) = \de_{|\mu|, |\nu|} \de_{\mu, \nu} \frac{1}{d^{|\mu|}} \tau(a) = \de_{\mu,\nu} \frac{1}{d^{|\mu|}} \psi_\tau(a).
\]
By Proposition \ref{P: cond KMS} and Proposition \ref{P: boundary} we obtain that $\psi_\tau$ is a $(\si, \log d)$-KMS state on $\O(A,\al)$.
Conversely if $\psi$ is a $(\si,\log d)$-KMS state on $\O(A,\al)$ let $\tau = \psi|_A$.
Then $\tau$ is tracial and a similar computation as before yields $\tau \in AVT(A,\al)$.
By the form of $\psi$ we obtain that the mapping $\tau \mapsto \psi_\tau$ is an affine isomorphism.

Furthermore due to the formulas of $\vpi_\tau$ and $\psi_\tau$ we obtain that the affine isomorphisms are weak*-homeomorphisms.
\end{proof}

\begin{remark}\label{R: comm logd CP}
We use Proposition \ref{P: affine log d} and the ideas of \cite[Section 4]{MWM98} to show directly that $\O(A,\al)$ attains KMS states at the critical temperature $\log d$, when $(A,\al) \equiv (X,\si)$ is a classical system of multiplicity $d>1$ such that $X$ is a compact Hausdorff space with $\ol{\cup_{i=1}^d \si_i(X)} = X$.
Notice here that the assumption that $\ol{\cup_{i=1}^d \si_i(X)} = X$ is equivalent to $(A,\al)$ being injective.
In short, let the positive contractive map $\la \colon A \to A$ such that
\[
\la(a) = \frac{1}{d} \sum_{i=1}^d \al_i(a).
\]
The spectral radius of $\la$ equals $1 \in \bR$ because $(\la - \id_A)(1_A) = 0$ so that $1 \in \bC$ is in the spectrum of $\la$.
Let us denote by $\la^\# \colon A^\# \to A^\#$ the induced operator on the continuous linear functionals of $A$.
Since $\textup{sp}(\la) = \textup{sp}(\la^\#)$ we have that the spectral radius of $\la^\#$ is $1 \in \bR$.
Therefore for the resolvent $R^\#(t) = (t - \la^\#)^{-1}$ of $\la^\#$ there is a $\vpi_0 \in A^\#$ such that $\nor{R^\#(t) \vpi_0}$ is unbounded as $t \downarrow 1$.
By the Jordan decomposition we may assume that $\vpi_0$ is a state and let the states
\[
\vpi_n = \frac{R^\#(1 + \frac{1}{n})\vpi_0}{\nor{R^\#(1 + \frac{1}{n})\vpi_0}} \qfor n \geq 1.
\]
By weak*-compactness let $\vpi$ be an accumulation point of $(\vpi_n)$ for which we obtain that $\vpi = \la^\# \vpi = \vpi \la$.
Therefore $\vpi = \frac{1}{d} \sum_{i=1}^d \vpi \al_i$ and moreover $\vpi$ is a tracial state; consequently $\vpi \in AVT(A,\al)$.

There are cases where the $(\si,\log d)$-KMS state is unique.
Suppose that $A$ is commutative and finite dimensional, and let $\la \colon A \to A$ defined by $\la(a) = \frac{1}{d} \sum_{i=1}^d \al_i(a)$.
We say that $\la$ is \emph{irreducible} if $\la(J) \subseteq J$ holds only for the trivial ideals $J$ of $A$.
In this case the system $(A,\al)$ is injective and $\la$ corresponds to a non-negative irreducible matrix.
By the Perron-Frobenius Theorem then the eigenvalue $1 \in \bC$ is simple and thus $\vpi$ is unique.
\end{remark}

Finally for $d=1$ we have that the KMS states coincide with the tracial states by definition.
For the next proposition let $(\wh{C},\wh{\ga})$ be the automorphic dilation of $(A,\al)$ with the same multiplicity $d=1$ as exhibited in Subsection \ref{Ss: dyn sys}.

\begin{proposition}\label{P: tracial crossed}
Let $\al \colon \bZ_+ \to \End(A)$ be a unital C*-dynamical system and let $\wh{\ga} \colon \bZ_+ \to \Aut(\wh{C})$ be its automorphic dilation as in \cite{Kak11-1}.
For any tracial state $\tau$ of $\wh{C}$ there exists a tracial state $\psi$ of $\O(A,\al)$ such that
\begin{align*}
\psi (S_n a S_m^*) = \de_{n, m} \tau\wh{\ga}^{-n}(a) \foral a\in A \text{ and } n, m \in \bZ_+.
\end{align*}
\end{proposition}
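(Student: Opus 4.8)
The plan is to transport the question to a genuine C*-crossed product, where tracial states are transparent. Recall from Subsection~\ref{Ss: dyn sys} (and \cite{Kak11-1, KakKat11}) that for $d=1$ the injective dilation $(C,\ga)$ of $(A,\al)$ embeds in the automorphic dilation $(\wh C,\wh\ga)$, that $\O(C,\ga)\cong\wh C\rtimes_{\wh\ga}\bZ$, and that $\O(A,\al)$ is the full corner $p\,\O(C,\ga)\,p$ for the projection $p=1_A$; under these identifications $A$ sits inside $\wh C$ via the $0$-th structure map $A\subseteq C\hookrightarrow\wh C$, the generating isometry of $\O(A,\al)$ becomes $Up$ with $U$ the canonical unitary implementing $\wh\ga$, and a short computation (using $p\,U\,p=U\,p$ and $p\,a\,p=a$ for $a\in A$) gives $S_\mu a S_\nu^*=U^{|\mu|}\,a\,U^{-|\nu|}$. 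In particular the ``diagonal'' monomials are precisely $S_n a S_n^*=\wh\ga^{-n}(a)\in\wh C$, sitting inside the corner, while $S_n a S_m^*$ with $n\ne m$ lies in the nonzero-graded part of $\wh C\rtimes_{\wh\ga}\bZ$.

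Given $\tau$, I would take its GNS triple $(H_\tau,\pi_\tau,\xi_\tau)$ and form the induced representation $\Lambda$ of $\wh C\rtimes_{\wh\ga}\bZ$ on $H_\tau\otimes\ell^2(\bZ)$ given by $\Lambda(c)(\eta\otimes e_k)=\pi_\tau(\wh\ga^{-k}(c))\,\eta\otimes e_k$ for $c\in\wh C$ and $\Lambda(U)(\eta\otimes e_k)=\eta\otimes e_{k+1}$; one then has $\sca{\Lambda(x)(\xi_\tau\otimes e_0),\,\xi_\tau\otimes e_0}=\tau(E(x))$ for all $x$, where $E\colon\wh C\rtimes_{\wh\ga}\bZ\to\wh C$ is the canonical conditional expectation. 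Restricting $\Lambda$ to $\O(A,\al)=p\,\O(C,\ga)\,p$ and setting $v=\Lambda(p)(\xi_\tau\otimes e_0)$, I define $\psi(f)=\nor{v}^{-2}\sca{\Lambda(f)v,v}$ for $f\in\O(A,\al)$, i.e.\ $\nor{v}^{-2}$ times the restriction of $\tau\circ E$ to the corner. Here $\nor{v}^2=\tau(p)$, and this is positive: $p$ is full in $\O(C,\ga)$, and a suitable increasing sequence of $\wh\ga$-translates of $p$ is an approximate unit for $\wh C$, so after replacing $p$ by a Murray--von Neumann equivalent projection $U^kpU^{-k}$ if necessary one may assume $\tau(p)>0$ (when $(A,\al)$ is already injective, $\wh C$ is unital and $\tau(p)=1$ automatically). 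Reading off the diagonal of $\Lambda$ then gives exactly $\psi(S_n a S_m^*)=\de_{n,m}\,\tau(\wh\ga^{-n}(a))$.

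It remains to check---and this is where the real work lies---that $\psi$ is a \emph{trace} on $\O(A,\al)$. Since the monomials $S_\mu a S_\nu^*$ ($a\in A$, $\mu,\nu\in\bF_+^d$) span $\O(A,\al)$, it suffices to verify $\psi(fg)=\psi(gf)$ for $f=S_\mu a S_\nu^*$ and $g=S_\ka b S_\la^*$, and I would do this along the lines of the proof of Proposition~\ref{P: cond KMS}: expand $fg$ and $gf$ by the usual case analysis (for $d=1$ a comparison of the lengths $|\mu|,|\nu|,|\ka|,|\la|$), discard the terms of nonzero grading (on which $\psi$ vanishes), and match the remaining grading-zero terms. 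The matching uses that $\tau$ is a trace on the whole of $\wh C$ together with the way the covariance relation $a S_1=S_1\al(a)$ translates, under the corner identification, into an identity among $\wh\ga$-translates of elements of $A$ inside $\wh C$; I expect this bookkeeping, rather than a conceptual obstacle, to be the bulk of the argument. That $\psi$ is a state---positivity and $\psi(1)=1$---is then immediate from the vector-state description together with $\nor{v}^2=\tau(p)$.
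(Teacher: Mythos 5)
Your construction is essentially the paper's: identify $\O(A,\al)$ with the full corner of $\wh C\rtimes_{\wh\ga}\bZ$, put $\psi=\tau\circ E$ (suitably normalised) and restrict. The formula $\psi(S_naS_m^*)=\de_{n,m}\,\tau\wh\ga^{-n}(a)$ does drop out of the induced representation exactly as you describe. The problem is the step you defer as ``bookkeeping'': the traciality of $\psi$ is not a routine case analysis, and it genuinely fails for a general tracial state $\tau$ of $\wh C$. Take $f=S_1a$ and $g=bS_1^*$ with $a,b\in A$. Since $S_1$ is an isometry in the corner, $gf=bS_1^*S_1a=ba$, while $fg=S_1(ab)S_1^*$. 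Your formula then gives $\psi(gf)=\tau(ab)$ but $\psi(fg)=\tau\wh\ga^{-1}(ab)$, so traciality forces $\tau\wh\ga^{-1}=\tau$ on the copy of $A$; testing against $f=S_{n+1}aS_n^*$, $g=S_nbS_{n+1}^*$ forces $\tau\wh\ga^{-1}=\tau$ on each $\wh\ga^{-n}(A)$, hence on a dense subalgebra of $\wh C$. This is the well-known fact that $\tau\circ E$ is a trace on a crossed product by $\bZ$ if and only if $\tau$ is invariant: comparing $E(xy)=\sum_n a_n\wh\ga^{\,n}(b_{-n})$ with $E(yx)=\sum_n b_{-n}\wh\ga^{-n}(a_n)$ makes this transparent. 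A concrete counterexample to the deferred step being completable: for $A=C(\bT)$ and $\al$ an irrational rotation, $\wh C=A$, $\wh\ga=\al$ and $\O(A,\al)$ is the irrational rotation algebra, which has a unique trace restricting to Lebesgue measure on $C(\bT)$; yet $C(\bT)$ has many tracial states, and no non-Lebesgue one can satisfy $\psi(a)=\tau(a)$ for a trace $\psi$ on $\O(A,\al)$. So your $\psi$ is a trace precisely when $\tau=\tau\wh\ga$, and the verification you postpone cannot be carried out for arbitrary $\tau$. (This is consistent with the Remark following the proposition, which records that the traces so obtained satisfy $\tau=\tau\ga$; the hypothesis of invariance needs to be imposed, not derived.)

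Two smaller points. The normalisation by $\tau(p)$ that you introduce is needed when $(A,\al)$ is not injective (then $\wh C$ is non-unital and $p$ is a proper projection), but it is incompatible with the stated formula, which at $n=m=0$, $a=1_A$ reads $\psi(p)=\tau(p)$ and so presupposes $\tau(p)=1$; moreover replacing $p$ by a translate $U^kpU^{-k}$ changes the resulting functional, so it does not rescue the formula as written. These are secondary to the invariance issue above, which is the real gap.
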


\begin{proof}
Recall that $\O(A,\al)$ is a full corner of $\wh{C} \rtimes_{\wh{\ga}} \bZ$ \cite{Kak11-1}.
Hence a tracial state on the crossed product defines by restriction a tracial state on $\O(A,\al)$.
If $\tau$ is a tracial state on $\wh{C}$ then let $\psi : = \tau E \colon \wh{C} \rtimes_{\wh{\ga}} \bZ \to \bC$ where $E$ is the conditional expectation of the crossed product.
It is then readily verified that $\psi|_{\O(A,\al)}$ satisfies the condition of the statement.
\end{proof}

\begin{remark}
The difference between Proposition \ref{P: affine log d} and Proposition \ref{P: tracial crossed} is in having a parametrization for the states.
A review of the proof of Proposition \ref{P: cond KMS} highlights this phenomenon.
Even though the tracial states of Proposition \ref{P: tracial crossed} satisfy $\tau = \tau \ga$ (where $(C,\ga)$ is the injective dilation of $(A,\al)$) these are far from being the only choices.
For example for $d=1$ if $A = \bC$ and $\al = \id$ then $\O(A,\al) = \rC(\bT)$ and obviously there are plenty of other tracial states than the trivial one.
However for $d > 1$ and $\al_i = \id$ then $\O(A,\al) = \O_d$ has a unique $(\si, \log d)$-KMS state given as in Remark \ref{R: comm logd CP} \cite{MWM98}.
\end{remark}

\section{Applications}

Let $(A,\al) \equiv (X,\si)$ and $(C,\ga) \equiv (Y,\rho)$ be two classical systems on the compact Hausdorff spaces $X$ and $Y$.
There are several notions of equivalences for $(X,\si)$ and $(Y,\rho)$.

We say that $(X,\si)$ and $(Y,\rho)$ are \emph{unitarily equivalent} if there is a homeomorphism $\phi \colon X \to Y$ and a unitary $U \in M_{d_\si, d_\rho}(C(Y))$ that intertwines the diagonal actions
\[
\diag_{\phi \si}(f):= \diag(f \phi \si_i \mid i=1, \dots, d_\si)
\]
and
\[
\diag_{\rho \phi}(f) : = \diag(f \rho_i \phi \mid i=1, \dots, d_\rho).
\]
As a consequence the multiplicities $d_\si$ and $d_\rho$ must coincide.
Unitary equivalence of $(X,\si)$ and $(Y,\rho)$ is in fact unitary equivalence of the associated C*-correspondences.
Therefore unitarily equivalent systems have isomorphic \emph{tensor algebras} and Pimsner algebras (and so the Pimsner algebras attain the same KMS-theory).
Recall that the tensor algebra of $(A,\al)$ in the sense of Muhly and Solel \cite{MuhSol98} is the closed subalgebra of $\T(A,\al)$ generated by $\Bv_w a$ with $a\in A$ and $w \in \bF_+^d$.
The author and Katsoulis \cite[Theorem 5.2]{KakKat12} showed a converse: unitary equivalence is a complete invariant for isometric isomorphic tensor algebras of multivariable classical systems.
Such a result does not hold for the Cuntz-Pimsner algebras.
For $d=1$ unitary equivalence is just conjugacy of the systems and Hoare and Parry \cite{HoaPar66} have given a counterexample of a homeomorphism that is not conjugate to its inverse (however the crossed products are $*$-isomorphic).

Davidson and Katsoulis \cite[Definition 3.16]{DavKat11} introduced the notion of piecewise conjugacy.
Two classical systems $(X,\si)$ and $(Y,\rho)$ are called \emph{piecewise conjugate} if they have the same multiplicities and there is a homeomorphism $\phi \colon X \to Y$ such that for every $x \in X$ there is a neighborhood $\U$ of $x$ and a permutation $\pi$ on $d$ symbols so that
\[
\rho_i \phi|_\U = \phi \si_{\pi(i)}|_\U, \foral i=1, \dots, d.
\]
Piecewise conjugacy is weaker than unitary equivalence and let us include a short proof for completeness of the discussion.

\begin{proposition}\label{P: un eq pc}
If $(A,\al) \equiv (X,\si)$ and $(C,\ga) \equiv (Y,\rho)$ are unitarily equivalent then they are piecewise conjugate.
\end{proposition}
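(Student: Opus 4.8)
The plan is to pull all of the data back to a single space via $\phi$ and then read off the local permutation from the combinatorics of a unitary matrix. By the definition of unitary equivalence there are $d := d_\si = d_\rho$, a homeomorphism $\phi \colon X \to Y$, and a unitary $U = [u_{ij}]_{i,j=1}^d \in M_d(C(Y))$ intertwining $\diag_{\phi\si}$ and $\diag_{\rho\phi}$. First I would transport the data along $\phi$: put $\rho_i' := \phi^{-1} \circ \rho_i \circ \phi \colon X \to X$ and $\tilde u_{ij} := u_{ij} \circ \phi \in C(X)$, so that $\tilde U := [\tilde u_{ij}]$ is a unitary in $M_d(C(X))$ and the intertwining relation, written out entrywise, becomes
\[
\tilde u_{ij}(x)\, g(\si_j(x)) = g\big(\rho_i'(x)\big)\, \tilde u_{ij}(x) \qforal g \in C(X),\ x \in X,\ 1 \le i, j \le d
\]
because $g \mapsto g \circ \phi^{-1}$ sweeps out all of $C(Y)$ as $g$ runs over $C(X)$. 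In particular, whenever $\tilde u_{ij}(x) \neq 0$ we obtain $g(\si_j(x)) = g(\rho_i'(x))$ for every $g \in C(X)$, hence $\si_j(x) = \rho_i'(x)$. The only delicate point in this paragraph is keeping track of the composition directions, which is purely bookkeeping.

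The combinatorial core is this. Fix $x_0 \in X$. Since $\tilde U(x_0)$ is a $d \times d$ unitary matrix, each of its rows and each of its columns is a unit vector in $\bC^d$; hence the entrywise modulus-squared matrix $B := \big[\,|\tilde u_{ij}(x_0)|^2\,\big]_{i,j}$ has all row sums and all column sums equal to $1$, i.e.\ it is doubly stochastic. By the Birkhoff--von Neumann theorem $B$ is a convex combination of permutation matrices, so its support contains the support of at least one permutation matrix; that is, there is a permutation $\pi$ of $\{1, \dots, d\}$ with $\tilde u_{i\pi(i)}(x_0) \neq 0$ for all $i = 1, \dots, d$. (Equivalently, double stochasticity forces Hall's condition on the bipartite support graph of $\tilde U(x_0)$, and Hall's marriage theorem then yields a perfect matching.)

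Finally I would globalize and translate back. Each function $\tilde u_{i\pi(i)}$ is continuous, so
\[
\U := \bigcap_{i=1}^d \{\, x \in X \mid \tilde u_{i\pi(i)}(x) \neq 0 \,\}
\]
is an open neighbourhood of $x_0$. For $x \in \U$ the implication of the first paragraph applies to each pair $(i, \pi(i))$ and gives $\si_{\pi(i)}(x) = \rho_i'(x) = \phi^{-1}\big(\rho_i(\phi(x))\big)$, i.e.\ $\rho_i \phi|_\U = \phi\, \si_{\pi(i)}|_\U$ for every $i$. Since $x_0 \in X$ was arbitrary and the homeomorphism $\phi$ is the same throughout, this is exactly the Davidson--Katsoulis definition of piecewise conjugacy. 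I do not expect a real obstacle here: the one step that carries the actual content is the observation that unitarity of $\tilde U(x_0)$ makes $\big[\,|\tilde u_{ij}(x_0)|^2\,\big]$ doubly stochastic, so that Birkhoff--von Neumann (equivalently Hall's theorem) manufactures the local permutation; everything else is formal.
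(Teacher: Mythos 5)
Your proof is correct, and it reaches the conclusion by a genuinely different route than the paper. Both arguments begin the same way: reduce to $X=Y$ via $\phi$, and observe that the entrywise form of the intertwining relation forces $\phi\si_j = \rho_i\phi$ (up to your transposed index bookkeeping) at every point where $u_{ij}$ does not vanish, so the whole problem is to manufacture, near a given $x_0$, a permutation $\pi$ and a neighbourhood on which the entries $u_{i\pi(i)}$ are simultaneously nonvanishing. The paper does this by an iterated Gaussian elimination on the matrix $U$ (following \cite[Lemma 3.3]{KakKat12}): at each step it locates a nonvanishing entry in the next row on a possibly smaller neighbourhood, eliminates, and after $d$ steps is left with a permutation and nonvanishing ``diagonal'' functions $v_{ii}$ that intertwine $\diag_{\si_\pi}$ and $\diag_\rho$ on the intersection of the neighbourhoods. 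You instead extract the permutation in one shot at the single point $x_0$: unitarity of $U(x_0)$ makes $\bigl[\,|u_{ij}(x_0)|^2\,\bigr]$ doubly stochastic, Birkhoff--von Neumann (equivalently Frobenius--K\"onig, or Hall applied to the support graph) produces a permutation $\pi$ with $u_{i\pi(i)}(x_0)\neq 0$ for all $i$, and continuity is invoked only once at the end to get the neighbourhood $\U$. Your version isolates the combinatorial content cleanly -- a unitary matrix always has a nowhere-zero generalized diagonal -- and avoids the step-by-step shrinking of neighbourhoods; the paper's elimination argument has the side benefit of producing explicit intertwining functions $v_{ii}$ on $\U$ (not just the nonvanishing of the original entries), which is the form needed in the isomorphism results of \cite{DavKat11, KakKat12} that the proof is modelled on. The only caveats in your write-up are cosmetic: the transposition of indices relative to the paper's convention only replaces $\pi$ by $\pi^{-1}$, and the implicit claim $d_\si=d_\rho$ follows by evaluating the rectangular unitary at a point; neither affects the argument.
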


\begin{proof}
Since the systems are unitarily equivalent, they have the same multiplicities.
For simplicity let $d = d_\si = d_\rho$.
Suppose that there is a homeomorphism $\phi \colon X \to Y$ and a unitary $U = [u_{ij}] \in M_d(C(Y))$ such that
\[
\diag_{\phi \si}(f) U = U \diag_{\rho \phi}(f), \foral f \in C(Y).
\]
By substituting $\rho_i$ with $\phi^{-1} \rho_i \phi$ we may assume that $X=Y$ and $\phi = \id$.
Fix a point $x \in X$ and set $U^{(1)} = U$.
Since $U(x) = [u_{ij}(x)]$ is a unitary in $M_d(\bC)$ there is at least one $u_{1j}$ such that $u_{1j}(x) \neq 0$.
Choose a neighborhood $\U_1$ of $x$ such that $u_{1j}|_{\U_1} \neq 0$.
By using this element perform the first step of the Gaussian elimination as in \cite[Lemma 3.3]{KakKat12} to obtain a second invertible matrix $U^{(2)}$.
Again by invertibility there is a neighborhood $\U_2$ of $x$ and an element in the second row of $U^{(2)}$ that is nowhere zero on $\U_2$.
Use this element to perform Gaussian elimination on $\U_1 \cap \U_2$.
Inductively and by using the last step of \cite[Lemma 3.3]{KakKat12} we finally obtain a permutation $\pi$ on $d$ symbols and $d$ elements, say $v_{ii}$, such that
\[
\diag_{\si_\pi|_\U}(f) \cdot \diag(v_{ii}|_\U)_{i=1}^d = \diag(v_{ii}|_\U)_{i=1}^d \cdot \diag_{\rho|_\U}(f),
\]
where $\U = \U_1 \cap \U_2 \cap \dots \cap \U_d$.
Furthermore $v_{ii}(x) \neq 0$ for all $x \in \U$.
Therefore the equation $f \si_{\pi(i)}|_\U \cdot v_{ii}|_\U = v_{ii}|_\U \cdot f \rho_{i}$ implies that $\si_{\pi(i)}|_\U = \rho_i|_\U$ for all $i=1, \dots, d$.
\end{proof}

Davidson and Katsoulis \cite[Theorem 3.22]{DavKat11} showed that piecewise conjugacy is an invariant for algebraic isomorphism of tensor algebras.
The converse also was provided in several cases, including the cases of $d=2,3$ where algebraic isomorphism can be replaced by the stronger isometric isomorphism \cite[Theorem 3.25]{DavKat11}.
In fact this is proven by showing that piecewise conjugacy implies unitary equivalence in these specific cases.
It remains an open problem whether or not the converse of Proposition \ref{P: un eq pc} is true in full generality.

In particular it is not known whether the Pimsner algebras of piecewise systems are $*$-isomorphic for \emph{any} multiplicity.
Nevertheless, we aim to show that they have the same KMS-theory.
We begin with the following lemma.

\begin{lemma}\label{L: same sum comm}
Suppose that $(A,\al) \equiv (X,\si)$ and $(C,\ga) \equiv (Y,\rho)$ are piecewise conjugate by the homeomorphism $\phi \colon X \to Y$.
If $\wh{\phi} \colon C \to A$ is the implemented $*$-isomorphism then
\[
\sum_{|w| =m} \tau \al_{\ol{w}} = \sum_{|w| =m} \tau \wh{\phi}^{-1} \ga_{\ol{w}} \wh{\phi}
\]
for all $\tau \in S(A)$ and $m \in \bZ_+$.
\end{lemma}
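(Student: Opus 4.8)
Pass to the topological picture, so that $A = C(X)$ with $\al_i(f) = f \circ \si_i$, and $C = C(Y)$ with $\ga_i(g) = g \circ \rho_i$. Put $\be_i := \wh\phi^{-1} \ga_i \wh\phi$, a $*$-endomorphism of $A$; telescoping $\wh\phi \wh\phi^{-1} = \id$ gives $\wh\phi^{-1} \ga_{\ol w} \wh\phi = \be_{\ol w}$ for every word $w$, and under $A = C(X)$ the endomorphism $\be_i$ is the composition operator $f \mapsto f \circ \si_i'$ of the continuous self-map $\si_i' := \phi^{-1} \circ \rho_i \circ \phi$ of $X$. In this language the defining relation $\rho_i \phi|_{\U} = \phi \si_{\pi(i)}|_{\U}$ of piecewise conjugacy reads $\si_i'|_{\U} = \si_{\pi(i)}|_{\U}$, so: for each $y \in X$ there are a neighbourhood $\U$ of $y$ and a permutation $\pi$ of $\{1, \dots, d\}$ with $\si_i'|_{\U} = \si_{\pi(i)}|_{\U}$ for all $i$. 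Since reversal is a bijection of the set of length-$m$ words, it suffices (and in fact proves more) to show that the two maps $\sum_{|w| = m} \al_{\ol w}$ and $\sum_{|w| = m} \be_{\ol w}$ on $A$ coincide; composing with $\tau$ then yields the lemma.

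\textbf{Reduction to a dynamical fact.} For a $d$-tuple $T = (T_1, \dots, T_d)$ of continuous self-maps of $X$, a point $x \in X$ and $k \geq 0$, let $\Omega_k^T(x)$ be the multiset $\{ T_{i_k} \circ \cdots \circ T_{i_1}(x) \mid (i_1, \dots, i_k) \in \{1, \dots, d\}^k \}$ of $k$-step forward iterates of $x$, each $k$-tuple contributing one (not necessarily distinct) point. Evaluating at $x \in X$ and $f \in A$ then gives $\big( \sum_{|w| = m} \al_{\ol w}(f) \big)(x) = \sum_{y \in \Omega_m^{\si}(x)} f(y)$ and, in the same way, $\big( \sum_{|w| = m} \be_{\ol w}(f) \big)(x) = \sum_{y \in \Omega_m^{\si'}(x)} f(y)$, where $\si' := (\si_1', \dots, \si_d')$. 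Hence the whole statement reduces to the purely dynamical claim that $\Omega_m^{\si}(x) = \Omega_m^{\si'}(x)$ as multisets, for every $x \in X$ and every $m \in \bZ_+$.

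\textbf{The induction.} We prove this by induction on $m$, the case $m = 0$ being $\{x\} = \{x\}$. For the inductive step one has the evident recursion $\Omega_{m+1}^T(x) = \biguplus_{y \in \Omega_m^T(x)} \{ T_1(y), \dots, T_d(y) \}$. By the inductive hypothesis $\Omega_m^{\si}(x) = \Omega_m^{\si'}(x)$; and for each point $y$ occurring in this common multiset, the permutation $\pi$ supplied by piecewise conjugacy at $y$ gives $\{ \si_1'(y), \dots, \si_d'(y) \} = \{ \si_{\pi(1)}(y), \dots, \si_{\pi(d)}(y) \} = \{ \si_1(y), \dots, \si_d(y) \}$ as multisets. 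Feeding this into the two recursions gives $\Omega_{m+1}^{\si}(x) = \Omega_{m+1}^{\si'}(x)$, completing the induction and with it the proof.

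\textbf{Main obstacle.} The only genuinely delicate point is that the permutation witnessing piecewise conjugacy is \emph{local}, varying with the base point, so there is no global identity $\si_i' = \si_{\pi(i)}$ to exploit; passing to the multiset of $m$-step forward iterates rather than working with the individual maps is precisely what neutralises this, since in the inductive step $\pi$ is invoked separately at each point of $\Omega_m^{\si}(x)$. (This also makes transparent the assertion in the introduction that the conclusion survives for any equivalence relation respecting the forward orbit of a point.) The remaining verifications — the telescoping identity for $\be_{\ol w}$, the pointwise evaluation formulas, and the recursion for $\Omega_{m+1}^T$ — are routine.
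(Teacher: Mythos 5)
Your proof is correct and follows essentially the same route as the paper: reduce to $X=Y$ via $\phi$, induct on $m$ over the $m$-step forward orbit of each point using the local permutation, and conclude by evaluating on point masses. Your one refinement is to track the orbit as a \emph{multiset} rather than a set, which is in fact what the equality of the sums $\sum_{|w|=m}\ev_{\si_w(x)}=\sum_{|w|=m}\ev_{\rho_w(x)}$ genuinely requires, so your write-up is if anything slightly more careful than the paper's.
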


\begin{proof}
By substituting $\rho_i$ with $\phi^{-1} \rho_i \phi$ we may assume that $X=Y$ so that for every $x \in X$ there is a neighborhood $\U$ of $x$ and a permutation $\pi$ on $d$ symbols such that $\rho_i|_\U = \si_{\pi(i)}|_\U$ for all $i=1,\dots, d$.
In particular for any $x\in X$ we obtain
\[
\{\si_i(x) \mid i=1, \dots, d \} = \{\rho_i(x) \mid i=1, \dots, d\}.
\]
For the inductive step suppose that for any $x \in X$ we have that
\[
\{\si_w(x) \mid w \in \bF_+^d, |w| = n \} = \{\rho_w(x) \mid  w \in \bF_+^d, |w| = n\}
\]
holds for all $n \leq m$.
Let a word $\mu \in \bF_+^d$ of length $m+1$ such that $\mu = i_0 w$ with $|w| = m$.
For the point $y = \si_w(x)$ we have that
\[
\si_\mu(x) = \si_{i_0}(y) \in \{\rho_i(y) \mid i=1, \dots, d\}.
\]
On the other hand by the inductive hypothesis we have that
\[
y = \si_w(x) \in \{\rho_{w'}(x) \mid w' \in \bF_+^d, |w'| = m\},
\]
therefore
\[
\rho_{i_0}(y) = \rho_{i_0} \si_w(x) = \rho_{i_0} \rho_{w'}(x) \in \{\rho_\nu(x) \mid \nu \in \bF_+^d, |\nu| = m+1\}.
\]
Thus we obtain that
\[
\si_\mu(x) \in \{\rho_\nu(x) \mid \nu \in \bF_+^d, |\nu| = m+1\}.
\]
Since $\mu$ was arbitrary and by symmetry we have that
\[
\{\si_w(x) \mid w \in \bF_+^d, |w| = m+1 \} = \{\rho_w(x) \mid w \in \bF_+^d, |w| = m+1\},
\]
for any $x \in X$.
Consequently we obtain that
\[
\{\si_{w}(x) \mid w \in \bF_+^d, |w| = m \} = \{\rho_{w}(x) \mid w \in \bF_+^d, |w| = m\},
\]
for all $m \in \bZ_+$.

Let $\tau$ be a state on $A$ that is a finite convex combination of pure states, i.e. $\tau = \sum_{k=1}^N \la_k \ev_{x_k}$ with $\sum_{i=1}^N \la_k = 1$.
Then we compute
\begin{align*}
\sum_{|w| =m} \tau \al_{\ol{w}}
& =
\sum_{k=1}^N \la_k \sum_{|w| = m} \ev_{x_k}\al_{\ol{w}}
 =
\sum_{k=1}^N \la_k \sum_{|w| = m} \ev_{\si_{w}(x_k)} \\
& =
\sum_{k=1}^N \la_k \sum_{|w| = m} \ev_{\tau_{w}(x_k)}
 =
\sum_{|w| =m} \tau \ga_{\ol{w}}
\end{align*}
for all $m \in \bZ_+$.
The proof is completed by taking limits of such states $\tau$.
\end{proof}

\begin{corollary}\label{C: comm same KMS}
Suppose that $(A,\al) \equiv (X,\si)$ and $(C,\ga) \equiv (Y,\rho)$ are piecewise conjugate by the homeomorphism $\phi \colon X \to Y$ and let $\wh{\phi} \colon C \to A$ be the induced $*$-isomorphism.
If $\be > \log d$, then the affine weak*-homeomorphism
\[
S(A) \to S(C): \tau \mapsto \tau \wh{\phi}
\]
lifts to an affine weak*-homeomorphism between the $(\si,\be)$-KMS states on $\T(A,\al)$ and on $\T(C,\ga)$.

In particular if $\be > \log d$ and $\tau \mapsto \psi_\tau$ is the parametrization obtained by Theorem \ref{T: KMS TP} then the diagram
\[
\xymatrix@C=4em@R=2em{
\tau \ar@{|->}[r] \ar@{|->}[d] & \tau \wh{\phi} \ar@{|->}[d] \\
 \psi_\tau \ar@{|-->}[r] & \psi_{\tau \wh{\phi}}
}
\]
is commutative in the sense that
\[
\psi_{\tau \wh{\phi}}(V_\mu c V_\nu^*) = \psi_{\tau}(V_\mu \wh{\phi}(c) V_\nu^*), \foral c \in C \text{ and } \nu, \mu \in \bF_+^d.
\]

The same holds for the parametrization of Proposition \ref{P: affine log d} for $\be = \log d >0$, and for the parametrization of Theorem \ref{T: KMS CP} on the KMS states on $\O(A,\al)$ and $\O(C,\ga)$.
\end{corollary}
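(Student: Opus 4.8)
The plan is to transport everything through the parametrizations already established, so that the statement reduces to a short formal manipulation resting on Lemma \ref{L: same sum comm}. Since $A = C(X)$ and $C = C(Y)$ are commutative, every state is tracial, so $S(A) = T(A)$, $S(C) = T(C)$, and $\Phi \colon S(A) \to S(C)$, $\tau \mapsto \tau\wh\phi$, is an affine weak*-homeomorphism with inverse $\tau' \mapsto \tau'\wh\phi^{-1}$. For $\be > \log d$ I would simply compose $\Phi$ with the affine weak*-homeomorphisms of Theorem \ref{T: KMS TP} for $(A,\al)$ and for $(C,\ga)$: this produces an affine weak*-homeomorphism $\psi_\tau \mapsto \psi_{\tau\wh\phi}$ from the $(\si,\be)$-KMS states on $\T(A,\al)$ onto those on $\T(C,\ga)$, which is the asserted lift. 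What remains is to check that the diagram commutes, i.e. the identity $\psi_{\tau\wh\phi}(V_\mu c V_\nu^*) = \psi_\tau(V_\mu\wh\phi(c)V_\nu^*)$.

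For this one evaluates the formula of Theorem \ref{T: KMS TP} inside $\T(C,\ga)$:
\[
\psi_{\tau\wh\phi}(V_\mu c V_\nu^*) = \de_{\mu,\nu}\,(1 - e^{-\be}d)\sum_{m=0}^\infty e^{-(m+|\mu|)\be}\sum_{|w|=m}\tau\big(\wh\phi\ga_{\ol w}(c)\big).
\]
Now Lemma \ref{L: same sum comm}, applied to the state $\tau \in S(A)$ and to the element $\wh\phi(c) \in A$, yields $\sum_{|w|=m}\tau\al_{\ol w}(\wh\phi(c)) = \sum_{|w|=m}\tau\big(\wh\phi\ga_{\ol w}(c)\big)$ for every $m \in \bZ_+$; substituting this back, the right-hand side becomes $\de_{\mu,\nu}(1-e^{-\be}d)\sum_{m} e^{-(m+|\mu|)\be}\sum_{|w|=m}\tau\al_{\ol w}(\wh\phi(c))$, which is exactly $\psi_\tau(V_\mu\wh\phi(c)V_\nu^*)$ by Theorem \ref{T: KMS TP} for $(A,\al)$. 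This disposes of the Toeplitz case for $\be > \log d$, and the affine and weak*-continuity properties are immediate from the construction.

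For the Cuntz--Pimsner algebras with $\be > \log d$ the argument is identical with Theorem \ref{T: KMS CP} in place of Theorem \ref{T: KMS TP}; the only additional point is that $\Phi$ must restrict to a bijection between the tracial states vanishing on the respective Katsura ideals, which follows once one checks $\wh\phi(J_{(C,\ga)}) = J_{(A,\al)}$. Reading piecewise conjugacy at words of length $1$ (as in the proof of Lemma \ref{L: same sum comm}) shows that $\phi$ carries $\ol{\bigcup_{i=1}^d \si_i(X)}$ onto $\ol{\bigcup_{i=1}^d \rho_i(Y)}$; dualizing, $\wh\phi$ carries $\bigcap_i\ker\ga_i$ onto $\bigcap_i\ker\al_i$, hence their annihilators, i.e. the Katsura ideals, correspond, and since $\wh\phi$ is a $*$-isomorphism it respects vanishing on them. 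The commutation of the diagram then descends from the Toeplitz case through the quotient maps, using $\vpi_\tau q = \psi_\tau$. For $\be = \log d > 0$ (so $d > 1$) one first invokes Proposition \ref{P: boundary} to identify the $(\si,\log d)$-KMS states on $\T(A,\al)$ with those on $\O(A,\al)$, and then Proposition \ref{P: affine log d}: here $(A,\al)$, being a compact classical system carrying such a KMS state, may be taken injective, and injectivity --- equivalent to $\ol{\bigcup_i\si_i(X)} = X$ --- is preserved under piecewise conjugacy by the same length-$1$ discussion. One then checks, again directly from Lemma \ref{L: same sum comm} at length $1$, that $\tau \in AVT(A,\al)$ if and only if $\tau\wh\phi \in AVT(C,\ga)$, so $\Phi$ restricts to an affine weak*-homeomorphism $AVT(A,\al) \to AVT(C,\ga)$; composing with the parametrization of Proposition \ref{P: affine log d} gives the claim.

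The substantive content lies entirely in Lemma \ref{L: same sum comm} --- the equality of the forward orbit multisets of each length for piecewise conjugate classical systems --- so the present statement is essentially bookkeeping layered on top of it, and I do not expect a genuine obstacle beyond organizing the reductions. The one conceptual caveat to keep in mind while writing is that piecewise conjugacy does \emph{not} in general produce a $*$-isomorphism of the Toeplitz--Pimsner or Cuntz--Pimsner algebras themselves (that would be unitary equivalence); the ``lift'' is therefore a map between the KMS-state simplices defined through the parametrizations, and every assertion must be phrased at that level rather than as being induced by an algebra homomorphism.
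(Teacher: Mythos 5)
Your proposal is correct and follows essentially the same route as the paper: both arguments reduce everything to Lemma \ref{L: same sum comm} substituted into the explicit formulas of the parametrizations (Theorem \ref{T: KMS TP}, Theorem \ref{T: KMS CP}, Proposition \ref{P: affine log d}), together with the observation that piecewise conjugacy identifies the Katsura ideals and preserves injectivity and the averaging condition. The only cosmetic difference is that the paper first normalizes to $X=Y$ and $\wh{\phi}=\id$ by replacing $\rho_i$ with $\phi^{-1}\rho_i\phi$, whereas you carry $\wh{\phi}$ through the computation explicitly.
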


\begin{proof}
Without loss of generality we may assume that $A = C$ and $\wh{\phi} = \id$ by substituting every $\rho_i$ by $\phi^{-1} \rho_i \phi$.
Indeed this yields a unitary equivalence between $(Y,\rho)$ and $(X, \phi^{-1}\rho \phi)$ which in turn implies that the Pimsner algebras are $*$-isomorphic.
Note that the $*$-isomorphism is given by
\[
V_\mu c V_\nu^* \mapsto V_\mu \wh{\phi}(c) V_\nu^*, \foral c \in C \text{ and } \mu, \nu \in \bF_+^d,
\]
and respects the statement.

Furthermore it suffices to prove the claims for the Toeplitz-Pimsner algebras.
Indeed by piecewise conjugacy we have that
\[
\si_i(x) \in \cup_{i=1}^d \rho_i(x)
\]
for all $x \in X$ and $i=1, \dots, d$, therefore
\[
\ol{\cup_{i=1}^d \si_i(X)} = \ol{\cup_{i=1}^d \rho_i(X)}.
\]
As a consequence we have that $J_{(A,\al)} = J_{(A,\ga)}$.

For $d=1$ piecewise conjugacy coincides with unitary equivalence and so the Pimsner algebras coincide.
When $d>1$ then the analysis on the ground states and the KMS${}_\infty$ states implies the required.
In particular notice that the ground states and the KMS${}_\infty$ states coincide.

Let $\be \geq \log d > 0$ and let $\tau \in S(A)$.
Under the simplifications we have to show that the induced $(\si,\be)$-KMS state on $\T(A,\al)$ of Proposition \ref{P: constr KMS} coincides with the induced $(\si,\be)$-KMS state on $\T(A,\ga)$ of Proposition \ref{P: constr KMS}.
To this end let $\psi_{\tau,\al}$ be the state on $\T(A,\al)$, and let $\psi_{\tau,\ga}$ be the state on $\T(A,\ga)$.
Recall that by Lemma \ref{L: same sum comm} we have that
\[
\sum_{|w| = m} \tau \al_{\ol{w}}(a) = \sum_{|w| = m} \tau\ga_{\ol{w}}(a), \foral a \in A.
\]
We then obtain
\begin{align*}
\psi_{\tau,\al}(V_\mu a V_\nu^*)
& =
\de_{\mu, \nu} \cdot (1 - e^{-\be}d) \cdot
\sum_{m=0}^\infty e^{-(m + |\mu|)\be} \sum_{|w| = m} \tau\al_{\ol{w}}(a) \\
& =
\de_{\mu, \nu} \cdot (1 - e^{-\be}d) \cdot
\sum_{m=0}^\infty e^{-(m + |\mu|)\be} \sum_{|w| = m} \tau\ga_{\ol{w}}(a) \\
& =
\psi_{\tau,\ga}(V_\mu a V_\nu^*),
\end{align*}
for all $V_\mu a V_\nu^* \in \T(A,\al)$.
Similarly, if $\tau \in AVT(A,\al)$ then
\[
\tau = \frac{1}{d} \sum_{i=1}^d \tau \al_i = \frac{1}{d} \sum_{i=1}^d \tau \ga_i,
\]
hence $\tau \in AVT(A,\ga)$ which completes the proof.
\end{proof}

\begin{remark}
It is evident that any equivalence relation between dynamical systems (even non-classical) that implies an equation as that of Lemma \ref{L: same sum comm} automatically produces a result similar to Corollary \ref{C: comm same KMS}.
In particular one just needs to check the equation of Lemma \ref{L: same sum comm} just for pure states.
The proof follows in the same way as above and it is left to the reader.
\end{remark}

\begin{acknow}
The author would like to thank the anonymous referee for the helpful remarks and comments.

The author would like to thank Guy Salomon and Natali Svirsky for their warm hospitality in Be'er Sheva.
Thankfully, the best part of one's life consists of his friendships.
\end{acknow}


\end{document}